\definecolor{vert}{rgb}{0,0.6,0}
\theoremstyle{plain}
\newtheorem{thm}{Theorem}[section]
\newtheorem{lem}[thm]{Lemma}
\newtheorem{prop}[thm]{Proposition}
\theoremstyle{remark}
\newtheorem{rem}{\bf{Remark}}
\numberwithin{equation}{section}
\newcommand{\E}{\mathbb{E}}
\newcommand{\N}{\mathbb{N}}
\newcommand{\Rr}{\mathbb{R}}
\newcommand{\R}{\mathbb{R}}
\newcommand{\T}{\mathbb{T}}
\newcommand{\Tt}{\mathbb{T}}
\newcommand{\cL}{\mathcal{L}}
\newcommand{\al}{\alpha}
\newcommand{\gam}{\gamma}
\newcommand{\ep}{\varepsilon}
\newcommand{\lam}{\lambda}
\newcommand{\Del}{\Delta}
\newcommand{\Lam}{\Lambda}
\newcommand{\Div}{{\rm div}\,}
\begin{document}
\title[Stationary mean field games with congestion and quadratic Hamiltonians]{Existence for stationary mean field games with quadratic Hamiltonians with congestion}

\author{Diogo A. Gomes}
\address[D. A. Gomes]{
King Abdullah University of Science and Technology (KAUST), CSMSE Division , Thuwal 23955-6900. Saudi Arabia, and  
KAUST SRI, Uncertainty Quantification Center in Computational Science and Engineering.}
\email{diogo.gomes@kaust.edu.sa}
\author{Hiroyoshi Mitake}
\address[H. Mitake]{
Institute for Sustainable Sciences and Development, 
Hiroshima University 1-4-1 Kagamiyama, Higashi-Hiroshima-shi 739-8527, Japan}
\email{hiroyoshi-mitake@hiroshima-u.ac.jp}

\keywords{Mean Field Game; Quadratic Hamiltonians; Congestion}
\subjclass[2010]{
35J47, 
35A01} 

\thanks{
DG was partially supported by
KAUST SRI, Uncertainty Quantification Center in Computational Science and Engineering, CAMGSD-LARSys through FCT-Portugal and by grants
PTDC/MAT-CAL/0749/2012.
HM was partially supported by JST program to disseminate tenure tracking system. 
}
\date{\today}

\begin{abstract}
In this paper, we investigate the existence and uniqueness of solutions
to a stationary mean field game model introduced 
by J.-M. Lasry and P.-L. Lions. 
This model features a quadratic Hamiltonian 
with possibly singular congestion effects. Thanks to a new class of a-priori bounds, 
combined with the continuation method, we prove the existence of smooth solutions in arbitrary
dimensions. 
\end{abstract}

\maketitle


\section{Introduction}

Since the seminal papers \cite{C1, C2, ll1, ll2, ll3, ll4}, research on mean field games has been extremely active (see, for instance, the recent surveys \cite{llg2, cardaliaguet, achdou2013finite, GS} and the references therein).  Nevertheless, 
several fundamental questions have not yet been answered. 
%
In this paper,
we address one of those, and
prove existence and uniqueness of smooth solutions for stationary mean field games with congestion and quadratic Hamiltonian. 

Mean field games model 
large populations of rational agents who move according to certain stochastic optimal 
control goals.
To simplify the presentation, we
will work in the periodic setting, that is in the $d$ dimensional standard torus $\Tt^d$,  $d\geq 1$.
We consider a large stationary population of agents, whose 
statistical information is encoded in an unknown probability density $m:\Tt^d\times[0, +\infty)\to \Rr$. Each individual agent
wants to minimize an infinite horizon discounted cost given by
\[
u(x,t)=\inf\left\{ 
\E\left[\int_t^ {+\infty}e^{-s}\left(\frac{m(X(s),s)^\alpha|v(s)-b(X(s))|^2}{2}
+V(X(s), m(X(s),s))\right) \,ds\right]\right\}, 
\]
where the infimum is taken over all progressively measurable controls $v$,
\[
dX=v dt+\sqrt 2 d W_t \ \text{with} \ X(0)=x,
\]
$W_t$ and $\E$ denote a standard $d$-dimensional Brownian motion 
and the expected value, respectively.  
The constant $0<\alpha<1$ determines the strength of congestion effects
in the term 
$m^\alpha |v-b(x)|^2$, and makes it costly to move in areas of high density 
with a drift $v$ substantially different from a reference vector field $b:\Tt^d\to \Rr^d$.
The function $V:\Tt^d\times \Rr^+\to \Rr$ 
accounts for 
additional spatial preferences of the agents. We assume $b$ and $V$ to be smooth functions.  

Under standard assumptions of rationality and symmetry, one can derive a mean field problem which models this setup. 
A detailed discussion can be found
in \cite{LCDF, ll3}, where the following
problem, consisting of a viscous Hamilton-Jacobi equation for $u$, coupled with a 
Fokker-Planck equation for $m$, was introduced:
\[
\begin{cases}
\displaystyle 
-u_t+u-\Del u + \frac{|Du|^2}{2m^{\al}}+b(x)\cdot Du= V(x,m)\\
m_t+m-\Del m-\Div\big(m^{1-\al}Du\big)-\Div(mb)=0, 
\end{cases}
\]
together with initial conditions for $m(x,0)$ and suitable asymptotic behavior for $u$. 

In the present paper, we consider a stationary version of this problem, which for
$u, m:\Tt^d\to \Rr$, $m>0$, is given by the system
\begin{numcases}
{}
u-\Del u + \frac{|Du|^2}{2m^{\al}}+b(x)\cdot Du= V(x,m)
& in $\T^{d}$, \label{mfg-1} \\
m-\Del m-\Div\big(m^{1-\al}Du\big)-\Div(mb)=1,  
& in $\T^d$.   \label{mfg-2} 
\end{numcases}
where the right hand side of the second equation is an additional source term for $m$ (to avoid the trivial solution $m=0$).  In \cite{LCDF}, only the uniqueness of smooth solutions was proven. However, existence of solutions, the main result of this paper, was not yet known for both stationary and time-dependent problems. The fundamental difficulty lies in the possibly singular behavior due to congestion. The dependence on $m$ in  
the optimal control problem causes the singularity in the equation \eqref{mfg-1}, for which we had to develop a new class of estimates.  
%
Thanks to those we obtain our main result, which is 
\begin{thm}\label{thm:main}
Assume the following{\rm:}
\begin{itemize}
\item[{\rm(A1)}] $0 \le\al<1${\rm;} 
\item[{\rm(A2)}] $V:\T^d\times \Rr^{+}\to \Rr$, 
$V(x,m)\in C^\infty(\T^d\times \Rr^+)$ is globally bounded with bounded derivatives and non-decreasing with respect to $m${\rm;} 
\item[{\rm(A3)}] $b:\Tt^d\to \Rr^d$, $b\in C^\infty(\Tt^d)$.
\end{itemize}
Then there exists a solution $(u,m)\in C^{\infty}(\T^d)\times C^{\infty}(\T^d)$ to \eqref{mfg-1}-\eqref{mfg-2} with $m>0$.  
Furthermore, if $V$ is strictly increasing with respect to $m$, then a solution is unique. 
\end{thm}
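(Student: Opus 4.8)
The plan is to prove existence by the \emph{method of continuity}, isolating the singular coupling through a one-parameter family of problems, and to obtain uniqueness by a Lasry--Lions-type monotonicity argument as in \cite{LCDF}. For $\lam\in[0,1]$ I would consider the family obtained by replacing the right-hand side $V(x,m)$ of \eqref{mfg-1} by $\lam V(x,m)$, leaving \eqref{mfg-2} unchanged; at $\lam=0$ the pair $(u,m)=(0,m_0)$, where $m_0>0$ solves the linear Fokker--Planck equation $m_0-\Del m_0-\Div(m_0 b)=1$, is an explicit solution, while $\lam=1$ recovers \eqref{mfg-1}--\eqref{mfg-2}. Working in H\"older spaces, set $\Lam=\{\lam\in[0,1]:\text{the }\lam\text{-problem has a classical solution with }m>0\}$. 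Since $0\in\Lam$, it suffices to show $\Lam$ is open and closed: openness from the implicit function theorem, closedness from uniform a-priori estimates.

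For openness I would linearize at a solution $(u,m)$ and show the resulting operator is an isomorphism from $C^{2,\theta}\times C^{2,\theta}$ onto $C^{0,\theta}\times C^{0,\theta}$. The linearization couples a viscous transport equation for $\dot u$ with a Fokker--Planck equation for $\dot m$, interacting through the terms produced by differentiating $|Du|^2/(2m^\al)$ and through $V_m$. Its invertibility I would deduce from a linear analogue of the energy identity below, in which the sign condition $V_m\ge0$ (A2), the positivity of $m$, and the a-priori gradient bounds control the congestion cross-term $-\tfrac{\al|Du|^2}{2m^{\al+1}}\dot m$; since the operator is a compact perturbation of an invertible elliptic one, Fredholm theory upgrades the resulting injectivity to invertibility. (At $\lam=0$ this cross-term vanishes because $Du=0$, so the method starts cleanly.)

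The heart of the matter, and the source of the a-priori bounds needed for closedness, is to keep $m$ bounded away from $0$ so that $m^{-\al}$ stays regular. First, the maximum principle applied to \eqref{mfg-1} gives $\|u\|_{\infty}\le\|V\|_{\infty}$ uniformly in $\lam$. Next, multiplying \eqref{mfg-1} by $m$ and \eqref{mfg-2} by $u$, integrating over $\T^d$, and subtracting exploits the exact cancellation between the two copies of $\int m^{1-\al}|Du|^2$ and yields the fundamental identity
\[
\tfrac12\int_{\T^d} m^{1-\al}|Du|^2\,dx=\int_{\T^d} u\,dx-\lam\int_{\T^d} m\,V(x,m)\,dx,
\]
whose right-hand side is controlled by $\|u\|_\infty$, the mass constraint $\int_{\T^d} m\,dx=1$ (from integrating \eqref{mfg-2}), and $\|V\|_\infty$. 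This weighted estimate is the new ingredient on which everything rests. The main obstacle is to convert it into a pointwise lower bound $m\ge c_0>0$: I would test \eqref{mfg-2} against a negative power $-m^{-\gam}$, integrate by parts, and absorb the cross term $\int m^{-\al-\gam}Du\cdot Dm$ by Cauchy--Schwarz against the bound above, thereby controlling $\int m^{-\gam}$ for a suitable $\gam>0$; a De Giorgi--Nash--Moser iteration on \eqref{mfg-2} then promotes this to $m\ge c_0$. I expect making this negative-power test close, despite the singular drift $m^{-\al}Du$, to be the crux of the argument.

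With $c_0\le m$ the singularity is removed: $\int|Du|^2\le c_0^{\al-1}\int m^{1-\al}|Du|^2\le C$, a gradient estimate for the quadratic Hamilton--Jacobi equation with bounded coefficients gives $\|Du\|_\infty\le C$, equation \eqref{mfg-2} becomes uniformly elliptic with bounded drift so that $m$ is bounded above and H\"older continuous, and Schauder and $L^p$ bootstrapping lift $(u,m)$ to $C^\infty$ with bounds independent of $\lam$. This closes $\Lam$ and produces the smooth solution at $\lam=1$. Finally, for uniqueness when $V$ is strictly increasing in $m$, I would subtract the equations for two solutions $(u_1,m_1)$ and $(u_2,m_2)$, test the difference of the Hamilton--Jacobi equations against $m_1-m_2$ and the difference of the Fokker--Planck equations against $u_1-u_2$, and combine as in \cite{LCDF}: the convexity of the quadratic congestion Hamiltonian forces the crossed terms to carry a favorable sign, while strict monotonicity gives $\int_{\T^d}\big(V(x,m_1)-V(x,m_2)\big)(m_1-m_2)\,dx>0$ unless $m_1=m_2$, after which \eqref{mfg-1} yields $u_1=u_2$.
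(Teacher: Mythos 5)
Your overall architecture (continuation in $\lam$, openness via the linearized operator, closedness via a-priori bounds, Lasry--Lions monotonicity for uniqueness) matches the paper's, and your uniqueness argument and your post-lower-bound bootstrap are essentially the paper's. The genuine gap is in the one step you yourself flag as the crux: the derivation of the pointwise lower bound $m\ge c_0>0$. Your plan is to use only the energy identity $\tfrac12\int_{\T^d}m^{1-\al}|Du|^2\le C$ together with testing \eqref{mfg-2} against $-m^{-\gam}$. Carrying this out, the troublesome term is
\begin{equation*}
\gam\int_{\T^d}m^{-\al-\gam}\,Du\cdot Dm\,dx
\ \le\ \frac{\gam}{2}\int_{\T^d}m^{-\gam-1}|Dm|^2\,dx
+\frac{\gam}{2}\int_{\T^d}m^{-\al-\gam}\cdot m^{1-\al}|Du|^2\,dx,
\end{equation*}
and the second integral carries the extra singular weight $m^{-\al-\gam}$ relative to the energy bound; controlling it requires precisely the lower bound on $m$ you are trying to prove. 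Pairing the factors the other way in Cauchy--Schwarz produces $\int m^{-2\gam-\al-1}|Dm|^2$, which cannot be absorbed into $\int m^{-\gam-1}|Dm|^2$ unless $\al=0$. So the proposed negative-power test of the Fokker--Planck equation \emph{alone} does not close. There is a second circularity in the same step: promoting $\int m^{-\gam}\le C$ to $m\ge c_0$ by Moser iteration needs integrability of the drift $m^{-\al}Du+b$ beyond what is available, since in your own ordering the gradient bound $\|Du\|_\infty\le C$ only comes \emph{after} the lower bound.

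The paper's resolution is structurally different: it divides \eqref{mfg-1} by $rm^{r}$ and \eqref{mfg-2} by $(r+1-\al)m^{r+1-\al}$ and subtracts. These exponents are matched exactly so that the two second-order cross terms both equal $\int Du\cdot Dm/m^{r+1}$ and cancel identically (the cancellation \eqref{eq:cancel}), while the Hamiltonian contributes the \emph{good} term $\int |Du|^2/(2rm^{r+\al})$ which absorbs the drift $b\cdot Du/(rm^r)$ by Young's inequality. This yields $\|m^{-1}\|_{L^{r+1-\al}}\le C_\al(r)$ with $C_\al(r)$ bounded uniformly in $r$, hence $\|m^{-1}\|_{L^\infty}\le C$ directly, with no Harnack step. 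In other words, the HJ equation must be used pointwise-weighted against negative powers of $m$, not merely through $\|u\|_\infty$ and the energy identity; this is exactly the "new class of estimates" the paper advertises, and it is the ingredient missing from your proposal. Two smaller remarks: your injectivity/coercivity argument for the linearization uses $V_m\ge 0$, but with your family $\lam V$ the zeroth-order coupling coefficient can vanish, so you lose $L^2$ control of the $f$-component; the paper sidesteps this by deforming through $\lam V+(1-\lam)\arctan(m)$, which is strictly monotone for $\lam<1$, and treats merely non-decreasing $V$ by an $\ep\arctan(m)$ perturbation and compactness. Your energy identity itself is correct, but note the two copies of $\int m^{1-\al}|Du|^2$ do not cancel --- they combine with coefficient $\tfrac12$ --- which is harmless but worth stating precisely.
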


Numerous a-priori bounds for mean field games 
have been proved by various authors, including the first author (see, for instance  \cite{ll1}, \cite{ll2}, \cite{ll3}, \cite{GSM},  \cite{GIMY},  \cite{CLLP},
\cite{GPM1}, \cite{GPM2}, \cite{GPM3} \cite{GPatVrt}, \cite{porretta}, \cite{P2}).    
However, these bounds were designed to address a different
coupling, namely mean field games where the local dependence on $m$ is not singular when $m=0$. A typical example
is the following system
\begin{equation}\label{earon}
\begin{cases}
\displaystyle
u-\Del u + \frac{|Du|^2}{2} =m^\alpha
& \text{in}\,  \T^{d}\\
m-\Delta m-\Div(m Du)=1& \text{in}\, \T^d.  
\end{cases}
\end{equation}
In \eqref{earon}, the main difficulties are caused by the growth of the nonlinearity $m$, especially for large $\alpha>0$,
rather than singularities caused by $m$ vanishing.
Besides, \eqref{earon} can be regarded as an Euler-Lagrange equation of a suitable functional, whereas \eqref{mfg-1}-\eqref{mfg-2} does not have this structure. 

In Section \ref{sec:ape}, we start by exploring the
special form of \eqref{mfg-1} and \eqref{mfg-2} to obtain a bound
for $\|m^{-1} \|_{L^\infty(\T^n)}$. 
This estimate, combined with the techniques from \cite{CrAm}, yields a-priori regularity in $W^{2,p}(\T^d)$ for any $p\ge1$. From this, a simple argument shows
that any solution to  \eqref{mfg-1}-\eqref{mfg-2} is bounded in any Sobolev space $W^{k,p}(\T^d)$. Then, 
in Section \ref{sec:exist}, we prove the existence of solutions to \eqref{mfg-1}-\eqref{mfg-2} by using the continuation method together with the aforementioned
a-priori estimates. 
In Appendix \ref{sec:unique}, for completeness, we present the
uniqueness proof for 
solutions to \eqref{mfg-1}-\eqref{mfg-2}, based upon the ideas in \cite{LCDF} (see also \cite{GueU}). 
In a forthcoming paper \cite{GMi}, we will study general mean field games with congestion,  
for which the techniques of the present paper cannot be applied directly, as discussed in Remark \ref{rem:generalization} at the end of the next section.


\section{A-priori Estimates}\label{sec:ape}

In this section, we obtain a-priori bounds for solutions of
 \eqref{mfg-1}-\eqref{mfg-2}. In particular, 
we prove a $L^\infty$ bound for $m^{-1}$.  
From this, we derive estimates for $u, m$ in 
$W^{2,p}(\T^d)$ for any $p\ge1$. Then, by a bootstrapping argument, we establish
smoothness of solutions.   

\begin{prop}\label{prop:bound}
There exists a constant $C:=C(\|V\|_{\infty})\ge0$ such that for any classical solution $(u, m)$ of \eqref{mfg-1}-\eqref{mfg-2}
we have $\left\|u\right\|_{L^\infty(\T^d)}\leq C$. Furthermore, 
$m\ge0$ on $\T^d$, and $\|m\|_{L^{1}(\T^d)}=1$. 
\end{prop}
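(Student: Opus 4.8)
The plan is to prove the three assertions separately, all by elementary maximum-principle and integration arguments; the genuinely new work is reserved for the bound on $\|m^{-1}\|_{L^\infty}$ obtained afterward.

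First, for the bound on $u$, I would apply the maximum principle to \eqref{mfg-1}. Let $x_0\in\T^d$ be a point where $u$ attains its maximum. There $Du(x_0)=0$ and $\Del u(x_0)\le 0$, so both gradient-dependent terms $\tfrac{|Du|^2}{2m^{\al}}$ and $b(x_0)\cdot Du(x_0)$ vanish (positivity $m>0$ guarantees the first is well defined), and \eqref{mfg-1} collapses to $u(x_0)=V(x_0,m(x_0))+\Del u(x_0)\le\|V\|_\infty$. Evaluating instead at a minimum point $x_1$, where $\Del u(x_1)\ge 0$, gives $u(x_1)=V(x_1,m(x_1))+\Del u(x_1)\ge-\|V\|_\infty$. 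Together these yield $\|u\|_{L^\infty(\T^d)}\le\|V\|_\infty=:C$, a constant depending only on $\|V\|_\infty$ as claimed. The essential point is that the singular and drift terms are killed at critical points of $u$, so nothing is needed about $m$ beyond $m>0$.

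Second, for the mass normalization, I would integrate \eqref{mfg-2} over $\T^d$. Since $\T^d$ is a closed manifold, the three divergence terms $\Del m=\Div(Dm)$, $\Div(m^{1-\al}Du)$, and $\Div(mb)$ each integrate to zero, leaving $\int_{\T^d}m\,dx=\int_{\T^d}1\,dx=|\T^d|=1$.

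Third, and this is the only step needing care, I would establish $m\ge 0$. Rewriting the transport terms in \eqref{mfg-2} as $\Div(m^{1-\al}Du)+\Div(mb)=\Div(mw)$ with $w:=m^{-\al}Du+b$ (a fixed smooth field, as $m>0$), the equation becomes the stationary Fokker--Planck problem $m-\Del m-\Div(mw)=1$ with nonnegative source. A direct maximum principle fails here because in nondivergence form the zeroth-order coefficient $1-\Div w$ has indefinite sign; instead I would use the conservation structure via duality. For arbitrary $g\ge0$, solve the adjoint equation $\phi-\Del\phi+w\cdot D\phi=g$, which is uniformly elliptic with zeroth-order coefficient $1>0$, hence admits a solution with $\phi\ge0$ by the maximum principle. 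Integrating by parts on $\T^d$ gives the identity $\int_{\T^d}mg\,dx=\int_{\T^d}\phi\,dx\ge0$, and since $g\ge0$ is arbitrary we conclude $m\ge0$ (equivalently, positivity is inherited from the positivity-preserving resolvent of the Fokker--Planck operator); note also that for a classical solution $m>0$ is built into the notion of solution. Combining with the second step yields $\|m\|_{L^1(\T^d)}=\int_{\T^d}m\,dx=1$. The main obstacle in this proposition is precisely this last point: because \eqref{mfg-2} is in divergence form with a coefficient of indefinite sign, one must exploit its Fokker--Planck (adjoint) structure rather than a naive comparison argument.
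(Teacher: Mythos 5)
Your proof is correct, and for the $L^\infty$ bound on $u$ (evaluation at interior maxima and minima of $u$, where the gradient terms vanish) and for the normalization $\int_{\T^d}m\,dx=1$ (integrating \eqref{mfg-2} and discarding the divergence terms) it coincides exactly with the paper's argument. The one place you diverge is the nonnegativity of $m$: the paper simply asserts it ``by the maximum principle,'' while you observe, correctly, that the naive pointwise argument is delicate because in nondivergence form the zeroth-order coefficient $1-\Div w$ (with $w=m^{-\al}Du+b$) has no sign, and you instead pass to the adjoint problem $\phi-\Del\phi+w\cdot D\phi=g$, which \emph{does} obey the classical maximum principle, and conclude $\int_{\T^d}mg\,dx=\int_{\T^d}\phi\,dx\ge0$ for every $g\ge0$. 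This duality argument is the standard rigorous way to get positivity preservation for stationary Fokker--Planck equations, and it is a genuine strengthening of the paper's one-line claim; the only (minor) points you should make explicit are that the adjoint problem is solvable on $\T^d$ (uniqueness from the maximum principle plus the Fredholm alternative, using that $w$ is continuous since $m>0$ for a classical solution) and, as you already note, that $m>0$ is in any case built into the definition of a classical solution here, so this step is more a consistency check than a load-bearing one.
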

\begin{proof}
The $L^\infty$ bound is obtained by evaluating the equation at points of maximum  of $u$ (resp., minimum)
and using the facts that at those points $Du=0$, $\Delta u\leq 0$ (resp.,  $\geq 0$), and $V$ is bounded on $\T^d\times[0,\infty)$.
We then observe that 
$m$ is non-negative by the maximum principle. Moreover,  
it has total mass $1$ by integrating \eqref{mfg-2}.
\end{proof}

\begin{prop}\label{prop:estimate1}
There exists a constant $C:=C(\|b\|_{\infty}, \|V\|_{\infty})\ge0$ such that for any classical solution $(u, m)$ of \eqref{mfg-1}-\eqref{mfg-2}
we have
\[
\left\|\frac{1}{m}\right\|_{L^\infty(\T^d)}\leq C. 
\]
\end{prop}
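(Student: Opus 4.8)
The plan is to derive the lower bound on $m$ by a maximum-principle argument applied at a minimum point of $m$. Let $x_0\in\T^d$ be a point where $m$ attains its minimum $\mu:=\min_{\T^d}m$, so that $Dm(x_0)=0$ and $\Del m(x_0)\ge0$. Since $\|m\|_{L^1(\T^d)}=1$ by Proposition \ref{prop:bound} and $\T^d$ has unit measure, we have $\mu\le1$, so in particular $\mu^{\al}\le1$. Expanding the divergence terms in \eqref{mfg-2}, the contributions involving $Dm$ vanish at $x_0$, and the equation reduces there to
\[
\Del m(x_0)=m(x_0)-m(x_0)^{1-\al}\Del u(x_0)-m(x_0)\Div b(x_0)-1.
\]
Combined with $\Del m(x_0)\ge0$ and $m^{1-\al}>0$, this yields
\[
1\le \mu-\mu^{1-\al}\Del u(x_0)-\mu\,\Div b(x_0).
\]

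The crux is then to bound $\Del u(x_0)$ from below, which is precisely the quantity whose negative part must be controlled. Here I would use the Hamilton--Jacobi equation \eqref{mfg-1}, which reads
\[
\Del u(x_0)=u(x_0)+\frac{|Du(x_0)|^2}{2\mu^{\al}}+b(x_0)\cdot Du(x_0)-V(x_0,\mu).
\]
The main obstacle is that $x_0$ is a minimum of $m$, not of $u$, so $Du(x_0)$ need not vanish, and the singular term $|Du|^2/(2\mu^{\al})$ looks dangerous as $\mu\to0$. The key observation is that this term carries a favorable sign and can absorb the first-order term: completing the square gives
\[
\frac{|Du(x_0)|^2}{2\mu^{\al}}+b(x_0)\cdot Du(x_0)\ge-\frac{\|b\|_\infty^2\,\mu^{\al}}{2}\ge-\frac{\|b\|_\infty^2}{2},
\]
using $\mu^{\al}\le1$. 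Together with the $L^\infty$ bound on $u$ from Proposition \ref{prop:bound} and the boundedness of $V$, this produces $\Del u(x_0)\ge -C_1$ for a constant $C_1=C_1(\|b\|_\infty,\|V\|_\infty)$.

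Finally I would substitute this lower bound back to obtain
\[
1\le \mu+C_1\,\mu^{1-\al}+\|\Div b\|_\infty\,\mu.
\]
Because $0\le\al<1$, the right-hand side is a continuous increasing function of $\mu$ that vanishes as $\mu\to0^+$; hence the inequality forces $\mu\ge\mu_0$ for some $\mu_0=\mu_0(\|b\|_\infty,\|V\|_\infty)>0$, and taking $C:=1/\mu_0$ gives $\|1/m\|_{L^\infty(\T^d)}\le C$. I expect the completing-the-square step to be the only genuine difficulty: it is what turns the singular congestion term from an apparent liability into the very mechanism that produces the estimate.
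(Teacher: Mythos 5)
Your argument is correct, and it takes a genuinely different route from the paper. You work pointwise: evaluate the Fokker--Planck equation at a minimum point $x_0$ of $m$, use $Dm(x_0)=0$, $\Del m(x_0)\ge0$ to isolate $-\mu^{1-\al}\Del u(x_0)$, and then control $\Del u(x_0)$ from below via the Hamilton--Jacobi equation, where completing the square in $\frac{|Du|^2}{2\mu^\al}+b\cdot Du\ge -\frac{\mu^\al|b|^2}{2}$ is exactly the right move (and is legitimate since $\mu\le 1$ follows from $\|m\|_{L^1}=1$). The paper instead tests \eqref{mfg-1} with $m^{-r}/r$ and \eqref{mfg-2} with $m^{-(r+1-\al)}/(r+1-\al)$, exploits the cancellation \eqref{eq:cancel} of the cross terms, and obtains $L^{r+1-\al}$ bounds on $1/m$ that are uniform in $r$, letting $r\to\infty$. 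Both arguments hinge on the same structural feature --- the matching of the $m^{-\al}$ weight in the Hamiltonian with the $m^{1-\al}$ mobility in the transport term --- but yours uses it pointwise via the sign of the quadratic term, while the paper uses it in integrated form. Your proof is shorter and more elementary; it does, however, require classical ($C^2$) solutions and evaluation of both equations at a single point, whereas the integral method yields quantitative $L^p$ bounds that are more robust under weak convergence and is the version that survives the generalizations discussed in Remark \ref{rem:generalization}, where the pointwise cancellation is no longer available. One cosmetic remark: your constant involves $\|\Div b\|_{\infty}$ (and $\al$), not just $\|b\|_\infty$; the paper's proof has exactly the same implicit dependence, so this is not a gap relative to the stated proposition.
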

\begin{proof}
Let $r>\alpha$. 
Subtract equation \eqref{mfg-2} divided by $(r+1-\al)m^{r+1-\al}$ 
from  equation \eqref{mfg-1} divided by $rm^r$. 
Then, 
\begin{align}
\label{magic}
&\int_{\T^d}
\Big[
u-\Del u + \frac{|Du|^2}{2m^{\al}}+b\cdot Du-V
\Big]\cdot \frac{1}{rm^r}\,dx\\\notag
-&\, 
\int_{\T^d}
\Big[
m-\Del m-\Div\big(m^{1-\al}Du\big)-\Div(mb)
\Big]\cdot\frac{1}{(r+1-\al)m^{r+1-\al}}\,dx\\\notag
&
=
-\int_{\T^d}\frac{1}{(r+1-\al)m^{r+1-\al}}\,dx. 
\end{align}
Next, observe that  
\[
\int_{\T^d}\frac{\Del u }{rm^r}\,dx
=
\int_{\T^d}\frac{Du\cdot Dm }{m^{r+1}}\,dx,
\]
and
\[
\int_{\T^d}\frac{\Div\big(m^{1-\al}Du\big)}{(r+1-\al)m^{r+1-\al}}\,dx
=
\int_{\T^d}\frac{Du\cdot Dm }{m^{r+1}}\,dx.  
\]
Hence
\begin{equation}\label{eq:cancel}
\int_{\T^d}\frac{\Del u }{rm^r}\,dx
-\int_{\T^d}\frac{\Div\big(m^{1-\al}Du\big)}{(r+1-\al)m^{r+1-\al}}\,dx
=0. 
\end{equation}
Also, note the identity
\begin{align*}
&\int \Div(b m) \frac{m^{-r-1+\alpha}}{r+1-\alpha}
=
\int   m^{-r-1+\alpha} b \cdot Dm \\
&=
-\int b\cdot D \left(\frac{m^{-r+\alpha}}{r-\alpha}\right)=
\frac{1}{r-\alpha} \int \Div(b) m^{-r+\alpha}.
\end{align*}

Then \eqref{magic} is reduced to 
\begin{align*}
&\int_{\T^d}\frac{1}{(r+1-\al)m^{r+1-\al}}\,dx
+
\int_{\T^d}\frac{|Du|^2}{2rm^{r+\al}}\,dx
+
\int_{\T^d}\frac{|Dm|^2}{m^{r+2-\al}}\,dx\\
=&\, 
\int_{\T^d} \left[-\frac{V}{rm^{r}}-\frac{u}{rm^r}+\frac{1}{(r+1-\al)m^{r-\al}}
-\frac{b\cdot Du}{r m^r} -\frac{1}{r-\alpha}  \Div(b) m^{-r+\alpha}\right]\,dx\\
\le &\, 
\int_{\T^d}\frac{C}{rm^{r}}\,dx
+\int_{\T^d}\frac{C}{(r-\al)m^{r-\al}}\,dx+
\int_{\T^d}\frac{|b|^2}{rm^{r-\al}}+\frac{|Du|^2}{4 r  m^{r+\al}}\,dx
\end{align*}
in view of Proposition \ref{prop:bound}. 
Consequently,
\begin{align*}
&\int_{\T^d}\frac{1}{(r+1-\al)m^{r+1-\al}}\,dx
+
\int_{\T^d}\frac{|Du|^2}{4rm^{r+\al}}\,dx
+
\int_{\T^d}\frac{|Dm|^2}{m^{r+2-\al}}\,dx\\
\le &\, 
\int_{\T^d}\frac{C}{rm^{r}}\,dx
+\int_{\T^d}\frac{C}{(r-\al) m^{r-\al}}\,dx.
\end{align*}
By Young's inequality, for $\al\in[0,1)$, we have
\[
\frac{C}{rm^r}\le \frac{1}{4(r+1-\al)m^{r+1-\al}}+C_r^1,
\]
and
\[ 
\frac{C}{(r-\al)m^{r-\al}}\le 
\frac{1}{4(r-\al)m^{r+1-\al}}+C_r^2, 
\]
with 
\[
C_r^{1}:=\frac{(1-\al)4^{\frac{r}{1-\al}}C^{\frac{r+1-\al}{1-\al}}}{r(r+1-\al)}, \ 
C_r^{2}:=\frac{4^{r-\al}C^{r+1-\al}(r-\alpha)^{r-\alpha-1}}{(r+1-\al)^{r+1-\alpha}}. 
\]
Therefore, 
\[
\frac{1}{r+1-\al}\int_{\T^d}\frac{1}{m^{r-\al+1}}
\le 
2(C_r^1+C_r^2). 
\]

Thus, we get 
\[
\left\|\frac{1}{m}\right\|_{L^{r+1-\al}(\T^d)}
\le
\Big[
2 (r+1-\al) (C_r^1+C_r^2)
\Big]^{\frac{1}{r+1-\al}}=:C_{\al}(r). 
\]
We can easily check  that, for any $r_0>\alpha$ there exists $C_\al$  for which 
\[
C_{\al}(r)\le C_{\al} \ \text{for all} \ r\in[r_0,\infty).
\qedhere
\]
\end{proof}

\begin{prop}\label{prop:estimate3}
For any $p\geq 1$
there exists a constant $C:=C_p(\|b\|_{\infty}, \|V\|_{\infty})>0$ such that for any classical solution $(u, m)$ of \eqref{mfg-1}-\eqref{mfg-2}, we have 
$\|u\|_{W^{2,p}(\T^d)}+\|m\|_{W^{2,p}(\T^d)}\le C$. 
\end{prop}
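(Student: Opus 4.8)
The two preceding propositions supply exactly the information needed to remove the degeneracy of the system. By Proposition \ref{prop:bound} we have $\|u\|_{L^\infty(\T^d)}\le C$, and by Proposition \ref{prop:estimate1}, writing $\delta:=1/C$, the uniform lower bound $m\ge\delta>0$ on $\T^d$. The key consequence is that $m^{-\al}\le\delta^{-\al}$, so the coefficient of the quadratic term in \eqref{mfg-1} is bounded and \eqref{mfg-1} becomes a uniformly elliptic Hamilton--Jacobi equation with bounded coefficients and at most quadratic growth in $Du$; likewise the drift $m^{-\al}Du+b$ implicit in \eqref{mfg-2} is no longer singular. The plan is to first obtain a high-integrability bound on $Du$, then to bootstrap the two equations against each other by linear elliptic ($L^p$) estimates in the spirit of \cite{CrAm}, tracking the dependence of all constants only on $p$, $\|b\|_\infty$ and $\|V\|_\infty$. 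Since we are estimating classical (hence smooth) solutions, every differentiation and integration by parts below is justified a priori; only the uniformity of the bounds is at issue.

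The heart of the proof is a bound on $Du$, which I would obtain by an integral Bernstein argument applied to $w:=|Du|^2$. Differentiating \eqref{mfg-1} in each $x_k$, multiplying by $u_{x_k}$ and summing produces an elliptic identity for $w$ in which the quadratic term contributes a favourable transport term $m^{-\al}\,Du\cdot Dw$ together with a good term $|D^2u|^2$. The hard part is that the same differentiation generates the cross term $\al\,m^{-\al-1}|Du|^2\,Dm\cdot Du$, so the estimate for $u$ is genuinely coupled to $Dm$; a pointwise maximum-principle version of Bernstein fails because $Dm$ is not controlled at a maximum of $w$. I would therefore keep the computation in integral form, test against a suitable power of $w$, and use Young's inequality to absorb each cross term partly into the good quantity $\int_{\T^d}|D^2u|^2$ and the weighted energy $\int_{\T^d}|Dm|^2\,m^{-(r+2-\al)}\,dx$, and partly into lower-order terms. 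Here the estimates obtained inside the proof of Proposition \ref{prop:estimate1}, namely the control of $\int_{\T^d}|Du|^2\,m^{-(r+\al)}\,dx$ and of $\int_{\T^d}|Dm|^2\,m^{-(r+2-\al)}\,dx$, together with $m\ge\delta$ to neutralise all negative powers of $m$, are precisely what make the absorption possible. Iterating in the exponent yields $\|Du\|_{L^p(\T^d)}\le C$ for every finite $p$, and ultimately $\|Du\|_{L^\infty(\T^d)}\le C$.

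With $\|Du\|_{L^\infty}$ in hand the remainder is a standard bootstrap. I would first regard \eqref{mfg-2} as the linear equation $-\Del m+m-\Div\big(m\,(m^{-\al}Du+b)\big)=1$, whose drift $m^{-\al}Du+b$ lies in $L^p(\T^d)$ for every $p$ thanks to $m\ge\delta$ and $Du\in L^\infty$. By the De Giorgi--Stampacchia theory for elliptic equations with such a drift, and using $\|m\|_{L^1}=1$, this gives an upper bound $\|m\|_{L^\infty(\T^d)}\le C$. Now that $m$ is bounded both below and above, rewriting \eqref{mfg-2} as $-\Del m+m=1+\Div(m^{1-\al}Du+mb)$ with the argument of the divergence in $L^\infty$ yields $m\in W^{1,p}(\T^d)$ for all $p$ by Calder\'on--Zygmund estimates, and in particular $Dm\in L^p$.

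Finally I would close the loop on both equations. From \eqref{mfg-1} the equation $-\Del u+u=V-b\cdot Du-|Du|^2/(2m^\al)$ has right-hand side bounded in $L^\infty\subset L^p$, because $Du\in L^\infty$ and $m\ge\delta$, so elliptic regularity gives $\|u\|_{W^{2,p}(\T^d)}\le C$ for every $p$. Expanding the divergence in \eqref{mfg-2} as $(1-\al)m^{-\al}\,Dm\cdot Du+m^{1-\al}\Del u+b\cdot Dm+m\,\Div b$ and using $Du\in L^\infty$, $Dm\in L^p$, $\Del u\in L^p$ and the two-sided bound on $m$ shows this right-hand side is bounded in $L^p$, whence $\|m\|_{W^{2,p}(\T^d)}\le C$. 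This proves the asserted estimate for every $p\ge1$, with $C$ depending only on $p$, $\|b\|_\infty$ and $\|V\|_\infty$. The main obstacle, as indicated above, is the coupling between the gradient estimates for $u$ and $m$ forced by the congestion coefficient $m^{-\al}$, which is what requires the weighted energy bounds of Proposition \ref{prop:estimate1} rather than a purely pointwise gradient estimate.
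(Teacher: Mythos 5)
Your overall architecture coincides with the paper's: first control $Du$ using the quadratic structure of \eqref{mfg-1} and the bound on $m^{-\alpha}$ from Proposition \ref{prop:estimate1}, then bootstrap \eqref{mfg-2}. The second half of your argument (De Giorgi--Stampacchia for the $L^\infty$ bound on $m$ with the bounded drift $m^{-\al}Du+b$, then Calder\'on--Zygmund) is a legitimate alternative to the paper's route, which instead multiplies \eqref{mfg-2} by $m^p$ and iterates Sobolev embedding; either works once $Du\in L^\infty$ is in hand.

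The gap is in the step that produces $Du\in L^\infty$. The paper does not differentiate \eqref{mfg-1} at all: it invokes Lemma 4 of Amann--Crandall \cite{CrAm}, which gives $\|u\|_{W^{2,p}}\le C$ for semilinear equations $-\Del u = f(x,u,Du)$ with $|f|\le C(1+|Du|^2)$, with $C$ depending only on $\|u\|_{L^\infty}$ (Proposition \ref{prop:bound}) and the growth constant (here $\|m^{-\al}\|_\infty$, supplied by Proposition \ref{prop:estimate1}); no regularity of $m$ in $x$ enters because $m$ is frozen as a coefficient. Your integral Bernstein computation, by contrast, differentiates the equation and so must handle the term $\al\,m^{-\al-1}|Du|^2\,Dm\cdot Du$. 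You propose to absorb it by Young's inequality against $\int_{\T^d}|Dm|^2 m^{-(r+2-\al)}\,dx$, but after using $m\ge\delta$ that quantity is only an $L^2$ bound on $Dm$, so the complementary Young term is of order $\int_{\T^d}|Du|^{6}w^{2(p-1)}\,dx=\int_{\T^d}|Du|^{4p+2}\,dx$ --- a higher integrability of $Du$ than the one being established at that stage, i.e.\ the absorption as described is circular. The standard repair is to integrate by parts so the derivative falls on $Du\,w^{p}$ rather than on $m^{-\al}$, trading $Dm$ for $\Del u$ and $Dw$ and then absorbing into $\int |D^2u|^2 w^{p-1}$ via interpolation with $\|u\|_{L^\infty}$ (this is essentially how the Amann--Crandall estimate is proved); that manipulation is the actual content of the step and is missing from your sketch. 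A smaller point: $\|Du\|_{L^p}\le C_p$ for every finite $p$ does not by itself yield $\|Du\|_{L^\infty}\le C$; you should pass through $\|u\|_{W^{2,p}}\le C$ for some $p>d$ and the embedding into $C^{1,\gam}(\T^d)$, as the paper does in \eqref{estimate-holder}.
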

\begin{proof}
Let $(u,m)$ be a classical solution $(u,m)$ to \eqref{mfg-1}-\eqref{mfg-2}.  
In view of Lemma \cite[Lemma 4]{CrAm}, combined with Proposition \ref{prop:estimate1}
we conclude
that for all $p\in[1,\infty)$ there exists $C=C(\|V\|_{\infty},\|b\|_{\infty}, p)$ such that 
\[
\|u\|_{W^{2,p}(\T^d)}\le C.  
\]
In light of the Sobolev embedding theorem, we get 
\begin{equation}\label{estimate-holder}
\|u\|_{C^{1,\gam}(\T^d)}\le C\|u\|_{W^{2,p}(\T^d)}\le C.  
\end{equation}

Then, 
multiplying \eqref{mfg-2} by $m^{p}$ and using Young's inequality yield 
\begin{align*}
&
\int_{\T^d}m^{p+1}\,dx
+
p\int_{\T^d}m^{p-1}|Dm|^{2}\,dx
=
\int_{\T^d}m^{p}\,dx-p\int_{\T^d}m^{p}(g+b)\cdot Dm\,dx\\
\le&\, 
\left[
\frac{1}{2}\int_{\T^d}m^{p+1}\,dx+C
\right]
+
\left[
\frac{p}{2}\int_{\T^d}m^{p-1}|Dm|^{2}\,dx
+
Cp\int_{\T^d}(|g|^2+|b|^2)m^{p+1}\,dx
\right], 
\end{align*}
where $g:=Du/m^{\al}$. 
Noting that $|g|\le C$ in view of \eqref{estimate-holder} and 
$m^{p-1}|Dm|^2=C_p|Dm^{(p+1)/2}|^2$,   
we get 
\begin{equation}\label{ineq1}
\int_{\T^d}m^{p+1}\,dx
+
C_{p}\int_{\T^d}|Dm^{(p+1)/2}|^2\,dx
\le 
C+C_{p}^{'}\int_{\T^d}m^{p+1}\,dx. 
\end{equation}
Using H\"older's inequality we have
\[
\left(\int_{\T^d}m^{p+1}\right)^{1/(p+1)}
\leq \left(\int_{\T^d} m \right)^{2/(2+d p)}\left(\int_{\Tt^d} m^{2^*(p+1)/2 }\right)^{\frac{d p/(2+d p)}{2^*(p+1)/2}}.
\]
Moreover, using the Sobolev embedding theorem, we get 
\begin{align*}
\int_{\T^d}m^{p+1}&\leq \left(\int_{\Tt^d} m^{2^*(p+1)/2 }\right)^{\frac{d p/(2+d p)}{2^*/2}}\\
&
\leq 
C
\left(\int_{\T^d}m^{p+1}\,dx
+
\int_{\T^d}|Dm^{(p+1)/2}|^2\,dx\right)^{d p/(2+d p)}.
\end{align*}
Then, using the previous estimate and the fact that $d p/(2+d p)<1$ in the right-hand side of \eqref{ineq1}, 
we conclude that 
\begin{equation}\label{estimate-Dm}
\int_{\T^d}m^{p+1}\,dx
+
\int_{\T^d}|Dm^{(p+1)/2}|^2\,dx\leq C.
\end{equation}

Note now that if $m\in W^{1,q}(\T^d)$, we have
\begin{equation}\label{eq:m}
m-\Delta m=m^{1-\alpha}\Delta u + (1-\al)m^{-\al}Du\cdot Dm+\Div(mb)+1
\in L^{q}(\T^d). 
\end{equation}
Thus,  by standard elliptic regularity $m\in W^{2, q}(\T^d)$.
Consequently $m\in W^{1, q^*}(\T^d)$. 
In light of \eqref{estimate-Dm} for $p=1$ we have $m\in W^{1,2}(\T^d)$.  
Thus we obtain $m\in W^{2,2}(\T^d)$  and $m\in W^{1,2^{\ast}}(\T^d)$. 
By iterating this argument, we finally get 
$m\in W^{2, q}(\T^d)$ for any $q<\infty$. 
\end{proof}

\begin{prop}\label{prop:bootstrap}
For any integer $k\geq 0$
there exists a constant $C:=C(\|b\|_{\infty}, \|V\|_{\infty}, k)>0$ such that 
any classical solution $(u,m)$ of \eqref{mfg-1}-\eqref{mfg-2} satisfies
$\|u\|_{W^{k,\infty}(\T^d)}+\|m\|_{W^{k,\infty}(\T^d)}\le C$. 
\end{prop}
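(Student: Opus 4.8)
The plan is to prove Proposition~\ref{prop:bootstrap} by a bootstrapping argument, leveraging the $W^{2,p}$ bounds from Proposition~\ref{prop:estimate3} as the base case and the crucial lower bound $\|1/m\|_{L^\infty}\le C$ from Proposition~\ref{prop:estimate1} to control all the singular terms. First I would record that Proposition~\ref{prop:estimate3} together with the Morrey/Sobolev embedding already gives $u,m\in C^{1,\gamma}(\T^d)$ with uniformly bounded norms, and in particular $u,m$ and $1/m$ are all bounded in $C^{0,\gamma}$. This means every coefficient appearing in the equations --- $m^{-\alpha}$, $m^{1-\alpha}$, and $b$ --- is uniformly H\"older-bounded, which is exactly what is needed to start a Schauder iteration.

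The induction would be organized as follows. Suppose we have shown $\|u\|_{C^{k,\gamma}(\T^d)}+\|m\|_{C^{k,\gamma}(\T^d)}\le C$ for some $k\ge 1$. To gain one derivative, I would treat \eqref{mfg-1} and \eqref{mfg-2} as linear elliptic equations
\begin{align*}
u-\Delta u &= V(x,m)-\frac{|Du|^2}{2m^\alpha}-b\cdot Du=:F_1,\\
m-\Delta m &= \Div\big(m^{1-\alpha}Du\big)+\Div(mb)+1=:F_2,
\end{align*}
and show that the right-hand sides $F_1$ and $F_2$ lie in $C^{k-1,\gamma}$ with norms controlled by the inductive hypothesis. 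For $F_1$ this is a composition-and-product estimate: $V(x,m)$ is smooth in its arguments by (A2) and $m\in C^{k,\gamma}$; the term $|Du|^2 m^{-\alpha}$ is a product of $C^{k-1,\gamma}$ functions (using $Du\in C^{k-1,\gamma}$ and, critically, that $t\mapsto t^{-\alpha}$ is smooth on the range $[\,1/\|1/m\|_\infty,\,\|m\|_\infty\,]$ which is bounded away from $0$); and $b\cdot Du$ is similar. For $F_2$, after expanding the divergences as in \eqref{eq:m} one sees terms like $m^{1-\alpha}\Delta u$ and $m^{-\alpha}Du\cdot Dm$, again products and smooth compositions of $C^{k-1,\gamma}$ functions. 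Schauder estimates for $(I-\Delta)$ on the torus then upgrade $u,m$ to $C^{k+1,\gamma}$, closing the induction. Finally, since $C^{k+1,\gamma}\hookrightarrow W^{k+1,\infty}$ and the constants at each stage depend only on $\|b\|_\infty,\|V\|_\infty$ (through their higher derivatives, which (A2)--(A3) bound) and on $k$, one extracts the claimed $W^{k,\infty}$ bounds for every $k$, whence $u,m\in C^\infty$.

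The main obstacle, and the reason Proposition~\ref{prop:estimate1} is indispensable, is the singular factor $m^{-\alpha}$: differentiating it produces powers $m^{-\alpha-j}$, which would be uncontrollable if $m$ could approach $0$. The uniform positivity $m\ge 1/C>0$ is precisely what confines the argument of $t\mapsto t^{-\alpha}$ to a compact subinterval of $(0,\infty)$ on which this map and all its derivatives are bounded, so that the Fa\`a di Bruno / product estimates go through with constants depending only on the quantities already controlled. I expect the only real care to be needed in bookkeeping these composition estimates --- verifying that each differentiation of the nonlinear coefficients stays within $C^{k-1,\gamma}$ --- while the elliptic regularity step itself is standard. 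The bound $\|m\|_{L^1}=1$ from Proposition~\ref{prop:bound}, used already in Proposition~\ref{prop:estimate3}, fixes the remaining normalization so there is no loss of an additive constant in the Schauder estimates.
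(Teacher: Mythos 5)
Your proposal is correct and follows essentially the same route as the paper: starting from the $W^{2,p}$ bounds of Proposition~\ref{prop:estimate3} and the lower bound on $m$ from Proposition~\ref{prop:estimate1}, one shows the right-hand sides of \eqref{eq:u} and \eqref{eq:m} are H\"older continuous and bootstraps via Schauder estimates (upgrading $u$ first, then $m$, at each stage). The only cosmetic difference is that the paper obtains $m^{-\al}\in C^{\gam}(\T^d)$ via $D(m^{-\al})\in L^p$ and Morrey's theorem rather than via composition with $t\mapsto t^{-\al}$ on a compact subinterval of $(0,\infty)$; both are equivalent, and your remark about the $L^1$ normalization is unnecessary since $I-\Del$ is already invertible on the torus.
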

\begin{proof}
Note that  $D(m^{-\al})=m^{-(1+\al)} Dm\in L^{p}(\T^d)$ for large $p>1$ 
in view of Propositions  \ref{prop:estimate1}, \ref{prop:estimate3}. This
implies $m^{-\al}\in W^{1,p}(\T^d)$.  Hence, by Morrey's theorem, 
we have $m^{-\al}\in C^{\gam}(\T^d)$ for some $\gam\in(0,1)$. 
Also note that $|Du|^2\in C^{\gam}(\T^d)$. 
Therefore, going back to the equation \eqref{mfg-1} we have
\begin{equation}\label{eq:u}
u-\Del u=-\frac{|Du|^2}{2m^{\al}}-b\cdot Du+V\in C^{\gam}(\T^d).   
\end{equation}
Then, in view of the elliptic regularity theory  
we get $u\in C^{2,\gam}(\T^d)$.   
Note that the norm in $C^{\gam}(\T^d)$ of the right hand side of \eqref{eq:u} is estimated by a constant which only depends on $\|b\|_{\infty}, \|V\|_{\infty}$. Thus,  $u\le C(\|b\|_{\infty},\|V\|_\infty)$. 
Next, going back to the equation \eqref{eq:m} for $m$
and noting that the right hand side is $C^{\gam}(\T^d)$ now, 
we get $m\in C^{2,\gam}(\T^d)$ with $m\le C(\|b\|_{\infty},\|V\|_\infty)$.  

Once we know that $u,m\in C^{2,\gam}(\T^d)$, 
\eqref{eq:u} and \eqref{eq:m} imply $u, m\in C^{3,\gam}(\T^d)$. 
By continuing this so-called ``bootstrap" argument, we get the conclusion.  \end{proof}

\begin{rem}\label{rem:generalization}
The methods in this section rely strongly on the particular structure of the Hamiltonian which allows for 
the cancellation in \eqref{eq:cancel}. 
In the forthcoming paper \cite{GMi}, we address mean field games which generalize \eqref{mfg-1}-\eqref{mfg-2}, 
namely of the form
\begin{numcases}
{}
u-\Del u + m^{\al}H(x,\frac{Du}{m^\alpha}) =V(x,m)
& in $\T^{d}$ \nonumber \\
m-\Del m-\Div\big(m D_pH(x,\frac{Du}{m^\al})\big)=1,  
& in $\T^d$,  
\nonumber
\end{numcases}
but for which the cancellation in \eqref{eq:cancel} no longer holds. 
Such problems are natural in many applications, for instance \eqref{eq:cancel} 
is not valid for
the anisotropic quadratic Hamiltonian
$H(x,p)=2^{-1}|A(x)p+b(x)|^2$, where $A$ is a strictly positive definite matrix. 
This is addressed  in \cite{GMi} with a different approach. 
\end{rem}

\section{Existence by Continuation Method}
\label{sec:exist}

In this section we prove the existence of a unique classical solution to \eqref{mfg-1}-\eqref{mfg-2} by using the continuation method. 
We work under the assumptions of Theorem \ref{thm:main}.
For $0\leq \lambda\leq 1$ we consider the problem 
\begin{equation}\label{conti-1}
\begin{cases}
\displaystyle
u_{\lam}-\Del u_{\lam} + \frac{|Du_{\lam}|^2}{2m^\alpha_{\lam}}+\lambda b(x) \cdot Du_\lam-\lam V(x, m)-(1-\lam) V_0(m)= 0
& \text{in} \ \T^d,\\
m_{\lam}-\Del m_{\lam}-\Div\big(m_{\lam}^{1-\alpha} Du_{\lam}\big)-\lam\Div(b m_\lam)=1   
& \text{in} \ \T^d,  
\end{cases}
\end{equation}
where $V_0(m):=\arctan(m)$. 
Let $E^{k}:=H^k(\T^d)\times H^k(\T^d)$ for $k\in\N$, and 
$E^{0}:=L^2(\T^d)\times L^2(\T^d)$. 

For any $k_0\in\N$ with $k_0>d/2$, 
we define the map
$F:[0,1]\times E^{k_0+2}\to E^{k_0}$ by 
\[
F(\lam,u,m):=
\left(\begin{array}{l}
\displaystyle
u-\Del u+\frac{|Du|^2}{2m^{\al}}+\lambda b(x) \cdot Du-\lam V(x,m)-(1-\lam)V_0(m)\\
m-\Del m-\Div\big(m^{1-\al}Du\big)-\lambda \Div ( b m )-1.
\end{array}
\right).  
\]
Then, we can rewrite \eqref{conti-1} as 
\[
F(\lam,u_\lam,m_\lam)=0. 
\]
Note that for any $\gamma>0$, 
the map $F$ is $C^\infty$ in the set $\{(u,m)\in E^{k_0+2}(\T^d), m>\gamma\}$. 
This is because for $k_0>d/2$, the Sobolev space $H^{k_0}(\T^d)$ is an algebra. 
Moreover, 
if $k_0$ is large enough, then any solution $(u_\lambda, m_\lambda)$ in  $E^{k_0+2}$ 
is, in fact, in $E^{k+2}$ for all $k\in\N$, by the a-priori bounds 
in Section \ref{sec:ape}.

We define the set $\Lam$ by 
\[
\Lam:=\left\{\lam\in[0,1]\mid \eqref{conti-1} \ \text{has a classical solution}
\ (u,m)\in E^{k_0+2}
\right\}. 
\]
When $\lambda=0$ we have an explicit solution, namely $(u_0, m_0)=(\pi/4, 1)$.
Therefore, $\Lam\not=\emptyset$. The main goal of this section is to prove 
\[
\Lam=[0,1]. 
\]
To prove this, we show that $\Lam$ is relatively closed and open
on $[0,1]$. 

The closeness of $\Lam$ is a straightforward consequence of the estimates in Section \ref{sec:ape}: 
\begin{prop}\label{prop:3-1}
The set $\Lambda$ is closed. 
\end{prop}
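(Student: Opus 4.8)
The plan is to use the standard continuation-method strategy: show that $\Lambda$ is closed by taking a sequence $\lambda_j \in \Lambda$ with $\lambda_j \to \lambda_* \in [0,1]$ and producing a classical solution at $\lambda_*$. For each $j$ we have a solution $(u_{\lambda_j}, m_{\lambda_j}) \in E^{k_0+2}$ of \eqref{conti-1}. The first step is to observe that the a-priori estimates of Section \ref{sec:ape} apply uniformly to the $\lambda$-family \eqref{conti-1}, not merely to the original system \eqref{mfg-1}-\eqref{mfg-2}. The key point is that replacing $b$ by $\lambda b$ and $V$ by $\lambda V(x,m) + (1-\lambda)V_0(m)$ preserves all the structural features the proofs relied on: the quadratic/congestion form of the Hamiltonian and the divergence structure of the Fokker-Planck equation are untouched, so the crucial cancellation \eqref{eq:cancel} still holds. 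Since $\|\lambda b\|_\infty \le \|b\|_\infty$ and $\lambda V + (1-\lambda)V_0$ is uniformly bounded (as $\arctan$ is bounded and $V$ is bounded by (A2)) with the right monotonicity, Propositions \ref{prop:bound}, \ref{prop:estimate1}, \ref{prop:estimate3} and \ref{prop:bootstrap} yield constants independent of $\lambda \in [0,1]$.

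Thus I would first record a uniform lower bound $m_{\lambda_j} \ge 1/C > 0$ from Proposition \ref{prop:estimate1}, and uniform bounds $\|u_{\lambda_j}\|_{W^{k,\infty}} + \|m_{\lambda_j}\|_{W^{k,\infty}} \le C_k$ for every $k$ from Proposition \ref{prop:bootstrap}, all independent of $j$. The next step is a compactness argument: by these uniform bounds and the compact embedding $W^{k+1,\infty}(\T^d) \hookrightarrow\hookrightarrow C^{k}(\T^d)$ (Arzel\`a--Ascoli), we can extract a subsequence along which $(u_{\lambda_j}, m_{\lambda_j}) \to (u_*, m_*)$ in $C^k(\T^d) \times C^k(\T^d)$ for every $k$, with $(u_*,m_*) \in C^\infty(\T^d)\times C^\infty(\T^d)$ and $m_* \ge 1/C > 0$. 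Because the convergence holds together with all derivatives and $m_*$ stays bounded away from zero (so the singular term $m^{-\alpha}$ and its derivatives converge), we may pass to the limit in every term of \eqref{conti-1}; the continuity of $V$ and of $V_0 = \arctan$ in both arguments handles the right-hand sides. Hence $(u_*,m_*)$ is a classical solution at $\lambda_*$, and in particular it lies in $E^{k_0+2}$, so $\lambda_* \in \Lambda$.

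The main obstacle, and the place deserving care, is ensuring that the uniform lower bound on $m$ survives the limit, since this is what keeps the problem non-singular: without $m_* \ge 1/C$ the term $|Du_*|^2/(2m_*^\alpha)$ could blow up and $(u_*,m_*)$ would fail to be a classical solution. This is precisely why Proposition \ref{prop:estimate1} is stated with a constant depending only on $\|b\|_\infty$ and $\|V\|_\infty$: the same $L^\infty$ bound on $m^{-1}$ holds along the entire family, so the lower bound passes to the limit by uniform convergence. A secondary point is verifying that the a-priori estimates are genuinely $\lambda$-uniform; this I would dispatch by remarking that every constant produced in Section \ref{sec:ape} was expressed through $\|b\|_\infty$ and $\|V\|_\infty$ only, and that $\|(1-\lambda)V_0 + \lambda V\|_\infty \le \pi/2 + \|V\|_\infty$ uniformly in $\lambda$, so no estimate degenerates as $\lambda$ varies. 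With these observations the closedness of $\Lambda$ follows immediately.
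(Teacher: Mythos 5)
Your proposal is correct and follows essentially the same route as the paper: uniform a-priori bounds from Section \ref{sec:ape} (in particular Propositions \ref{prop:estimate1} and \ref{prop:bootstrap}), extraction of a convergent subsequence by compactness, and passage to the limit in \eqref{conti-1}, using the uniform lower bound on $m$ to control the singular term. You are in fact somewhat more explicit than the paper about why the estimates are uniform in $\lambda$ and why the lower bound on $m$ survives the limit, but the argument is the same.
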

\begin{proof}
To prove that $\Lam$ is closed, we need to check that 
for any sequence $\lam_n\in\Lam$ such that $\lam_n\to\lam_0$ as $n\to\infty$, we have $\lambda_0\in \Lam$.  
Fix such a sequence and corresponding solutions $(u_{\lam_n}, m_{\lam_n})$ to \eqref{conti-1} with $\lam=\lam_n$.
Since the a-priori bound in Proposition \ref{prop:bootstrap} is independent of 
$n\in\N$, by taking a subsequence, if necessary, we may 
assume that 
$(u_{\lam_n}, m_{\lam_n})\to (u, m)$ in $E^{k_0+2}$.   
Moreover,  $m^{-1}_{\lam_n}\to m^{-1}$ in $C(\T^d)$. 
Therefore, we can take the limit in  \eqref{conti-1}, and we 
get that $(u,m)$ is solution to \eqref{conti-1} with $\lam=\lam_0$. This 
implies $\lam_0\in\Lam$. 
\end{proof}

To prove that $\Lam$ is relatively open in $[0,1]$, we need to check 
that for any $\lambda_0\in \Lam$ there exists a neighborhood of
 $\lambda_0$ contained in $\Lam$. To do so, we will use the implicit function theorem  (see, for example, \cite{D1}, chapter X).  
For a fixed $\lam_0\in\Lam$, we consider  
the Fr\'echet derivative $\cL_{\lam_0}:E^{k_0+2}\to E^{k_0}$ of
$(u,m)\mapsto F(\lambda_0, u, m)$ at the point $(u_{\lam_0},m_{\lam_0})$, 
which is given by 
\begin{align}
&\cL_{\lam_0}(v,f)\nonumber\\
=&
\, 
\left(
\begin{array}{l}
\displaystyle
v-\Del v
+\frac{Du_{\lam_0}\cdot Dv}{m_{\lam_0}^{\al}}
-\frac{\al |Du_{\lam_0}|^2 f}{2m_{\lam_0}^{\al+1}}
+{\lam_0}b\cdot Dv-\big(\lam_0D_mV+(1-\lam_0)D_mV_0\big)f\\
f-\Del f
-\Div\big(m_{\lam_0}^{1-\al}Dv\big)
-(1-\al)\Div\big(m_{\lam_0}^{-\al}f Du_{\lam_0}\big)-{\lam_0} \Div( b f)
\end{array}
\right).   
\label{frechet}
\end{align}
Because of the a-priori bounds for $u$ and $m$ in Section \ref{sec:ape}, 
we can extend the domain of $\cL_{\lam_{0}}$  
by continuity to $E^{k+2}$ for any $k\leq k_0$. 
We will prove that $\cL_{\lam_0}$ is an isomorphism from 
$E^{k+2}$ to $E^{k}$ for any $k\geq 0$.

Define 
the bilinear mapping 
$B_{\lam_0}[w_1,w_2]:E^{1}\to\R$ by 
\begin{align*}
&B_{\lam_0}[w_1,w_2]\\
:=&\, 
\int_{\T^d}
\left[v_1
+\frac{Du_{\lam_0}\cdot Dv_1}{m_{\lam_0}^{\al}}
-\frac{\al |Du_{\lam_0}|^2 f_1}{2m_{\lam_0}^{\al+1}}
+{\lam_0}b\cdot Dv_1-\big(\lam_0D_mV+(1-\lam_0)D_mV_0\big)f_1\right]f_2\\
&+Dv_1\cdot Df_2-m_{\lam_0}^{1-\al}Dv_1Dv_2\\
&\,
+
\left[f_1
-(1-\al)\Div\big(m_{\lam_0}^{-\al}f_1 Du_{\lam_0}\big)-{\lam_0} \Div( b f_1)
\right](-v_2)
-Df_1\cdot Dv_2\,dx. 
\end{align*}
We set $Pw:=(f, -v)$ for $w=(v,f)$, and 
we observe that if $w_1\in E^k$ with $k\ge2$, then 
\[
B_{\lam_0}[w_1,w_2]=\int_{\T^d}\cL_{\lam_0}(w_1) \cdot P w_2\,dx.   
\]
The boundedness of $B_{\lam_0}$ is a straightforward result of 
Proposition \ref{prop:bootstrap}:  
\begin{lem}\label{lem:3-2}
There exists a constant $C>0$ such that 
\[
|B_{\lam_0}[w_1, w_2]|
\leq C\|w_1\|_{E^{1}} \|w_2\|_{E^{1}}.  
\]
for any $w_1, w_2\in E^{1}$. 
\end{lem}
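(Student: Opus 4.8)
The plan is to read off from the explicit formula that every term in the integrand defining $B_{\lam_0}[w_1,w_2]$ is the product of a uniformly bounded coefficient with two factors, each of which is a component of $w_1=(v_1,f_1)$ or $w_2=(v_2,f_2)$ differentiated at most once; the asserted estimate then follows by applying the Cauchy--Schwarz inequality to each term and summing the finitely many contributions.

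First I would record the $L^\infty$ bounds on the coefficients. By Proposition \ref{prop:bootstrap}, the functions $u_{\lam_0}$ and $m_{\lam_0}$, together with all their derivatives, are bounded on $\T^d$ by a constant depending only on $\|b\|_{\infty}$ and $\|V\|_{\infty}$, and by Proposition \ref{prop:estimate1} we also have $\|m_{\lam_0}^{-1}\|_{\infty}\le C$. Consequently all the coefficients appearing in $B_{\lam_0}$, namely
\[
m_{\lam_0}^{-\al},\quad m_{\lam_0}^{1-\al},\quad \frac{Du_{\lam_0}}{m_{\lam_0}^{\al}},\quad \frac{|Du_{\lam_0}|^2}{m_{\lam_0}^{\al+1}},\quad b,\quad \lam_0 D_mV+(1-\lam_0)D_mV_0,
\]
are bounded in $L^\infty(\T^d)$; here $D_mV$ is bounded by (A2), and $D_mV_0=(1+m^2)^{-1}$ is bounded by $1$.

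Next I would dispose of the two divergence terms $\Div\big(m_{\lam_0}^{-\al}f_1 Du_{\lam_0}\big)$ and $\Div(bf_1)$, which are the only places where a derivative could conceivably fall on a higher-order object. Expanding by the product rule,
\[
\Div\big(m_{\lam_0}^{-\al}f_1 Du_{\lam_0}\big)
=\big(m_{\lam_0}^{-\al}Du_{\lam_0}\big)\cdot Df_1
+f_1\,\Div\big(m_{\lam_0}^{-\al}Du_{\lam_0}\big),
\]
and similarly $\Div(bf_1)=b\cdot Df_1+f_1\,\Div b$. In each expansion the factor $f_1$ carries at most one derivative, while the remaining coefficients $\Div(m_{\lam_0}^{-\al}Du_{\lam_0})$ and $\Div b$ are again bounded on $\T^d$ by the preceding step (this is where $\Del u_{\lam_0}$ and $Dm_{\lam_0}$ enter, both controlled by Proposition \ref{prop:bootstrap}). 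This is precisely the point at which the full regularity obtained in Section \ref{sec:ape} is used.

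Finally, having rewritten $B_{\lam_0}[w_1,w_2]$ as a finite sum of integrals of the form $\int_{\T^d}a(x)\,(D^{j}w_1^{(i)})\,(D^{l}w_2^{(i')})\,dx$, with $a\in L^\infty(\T^d)$ and $j,l\in\{0,1\}$, I would bound each summand by $C\|w_1\|_{E^{1}}\|w_2\|_{E^{1}}$ via Cauchy--Schwarz, using that $v_i,f_i$ and their first derivatives all lie in $L^2(\T^d)$ with norm controlled by $\|w_i\|_{E^1}$. Summing gives the claim, and the constant is uniform in $\lam_0\in[0,1]$ since it depends only on $\|b\|_{\infty}$ and $\|V\|_{\infty}$. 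I do not expect a genuine obstacle here: the only delicate bookkeeping is the expansion of the divergence terms, and the essential structural input is that Proposition \ref{prop:bootstrap} renders every coefficient bounded, so that no term ever requires pairing derivatives of order higher than one.
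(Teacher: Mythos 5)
Your proof is correct and is exactly the argument the paper intends: the paper gives no proof at all, stating only that the bound is ``a straightforward result of Proposition \ref{prop:bootstrap}'', and your write-up supplies precisely the expected details (uniform $L^\infty$ bounds on all coefficients via Propositions \ref{prop:estimate1} and \ref{prop:bootstrap}, product-rule expansion of the two divergence terms so that no factor carries more than one derivative, then Cauchy--Schwarz term by term). The only cosmetic alternative would be to integrate the divergence terms by parts onto $v_2$ instead of expanding them, which avoids needing $\Div\big(m_{\lam_0}^{-\al}Du_{\lam_0}\big)\in L^\infty$, but your route is equally valid given Proposition \ref{prop:bootstrap}.
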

Thus, in view of the Riesz representation theorem for Hilbert spaces, 
there exists a linear mapping 
$A:E^{1}\to E^{1}$ 
such that 
\[
B_{\lam_0}[w_1, w_2]=(Aw_1, w_2)_{E^{1}}.  
\]
\begin{lem}\label{lem:3-3}
The operator $A$ is injective. 
\end{lem}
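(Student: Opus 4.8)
The plan is to prove injectivity of $A$ by showing directly that $\cL_{\lam_0}(w_1) = 0$ forces $w_1 = 0$, and then transferring this to $A$ via the identity $B_{\lam_0}[w_1, w_2] = (Aw_1, w_2)_{E^1}$. Since $A w_1 = 0$ in $E^1$ means $B_{\lam_0}[w_1, w_2] = 0$ for every $w_2 \in E^1$, and since $B_{\lam_0}[w_1, w_2] = \int_{\T^d} \cL_{\lam_0}(w_1) \cdot P w_2 \, dx$ whenever $w_1$ is regular enough, the first order of business is a bootstrap: an element $w_1 \in E^1$ with $A w_1 = 0$ satisfies $\cL_{\lam_0}(w_1) = 0$ in the weak sense, and by elliptic regularity (using the a-priori $C^\infty$ bounds on $u_{\lam_0}, m_{\lam_0}$ from Proposition \ref{prop:bootstrap} and $\|m_{\lam_0}^{-1}\|_\infty \le C$ from Proposition \ref{prop:estimate1}) we may assume $w_1 = (v, f)$ is smooth. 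So the real content is the uniqueness statement for the linearized system.

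The heart of the argument is to find the right pairing that exploits the adjoint structure of the two linearized equations. The linearization of the Hamilton--Jacobi equation (acting on $v$) and the linearization of the Fokker--Planck equation (acting on $f$) are formally adjoint up to the zeroth-order coupling terms, which is exactly what the definition of $B_{\lam_0}$ with the swap $Pw = (f, -v)$ is built to capture. The plan is to test the first component of $\cL_{\lam_0}(w_1) = 0$ against $f$ and the second against $-v$, integrate by parts, and add. The second-order terms $-\Del v$ and $-\Del f$ together with the transport terms $\frac{Du_{\lam_0} \cdot Dv}{m_{\lam_0}^\al}$ and $-(1-\al)\Div(m_{\lam_0}^{-\al} f Du_{\lam_0})$ and the drift terms in $b$ should cancel pairwise after integration by parts. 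What survives is a quadratic form in $(v, f)$ coming from the surviving zeroth-order terms, namely the $v \cdot f$, the $f^2$, the $v^2$ pieces, and crucially the two coupling terms $-\frac{\al |Du_{\lam_0}|^2}{2 m_{\lam_0}^{\al+1}} f^2$ and the monotone term $-(\lam_0 D_m V + (1-\lam_0) D_m V_0) f^2$. Because $V$ is non-decreasing in $m$ (assumption (A2)) and $V_0 = \arctan$ is strictly increasing, the coefficient $\lam_0 D_m V + (1-\lam_0) D_m V_0 \ge (1-\lam_0) D_m V_0 > 0$ is strictly positive, which is the sign that makes the form definite.

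The main obstacle will be handling the term $-\frac{\al |Du_{\lam_0}|^2}{2 m_{\lam_0}^{\al+1}} f^2$, which enters with an unfavorable sign and threatens the definiteness. I expect this is where the precise structure matters: after collecting all surviving terms, the quadratic form should read schematically as $\int_{\T^d} \big[ v^2 + f^2 + (\text{coupling}) v f + (\text{monotonicity}) f^2 - \frac{\al |Du_{\lam_0}|^2}{2 m_{\lam_0}^{\al+1}} f^2 \big] \, dx$ plus nonnegative gradient terms. The way to control the bad term is to notice that the coupling produced by the HJ transport term $\frac{Du_{\lam_0} \cdot Dv}{m_{\lam_0}^\al}$, when paired against $f$, combines with the FP coupling $-(1-\al)\Div(m_{\lam_0}^{-\al} f Du_{\lam_0})$ paired against $-v$ to partially absorb the quadratic $f^2$ term; one then estimates the residual using Cauchy--Schwarz against the positive gradient terms and the strictly positive monotonicity term. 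Once the total form is shown to be nonnegative and to vanish only when $f \equiv 0$, feeding $f = 0$ back into the linearized HJ equation gives $v - \Del v + (\text{transport in } v) = 0$, whose only solution on the torus is $v \equiv 0$ by the maximum principle. This yields $w_1 = 0$, hence $A$ is injective.
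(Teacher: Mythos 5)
Your proposal is essentially the paper's argument: pairing the linearized system against $Pw=(f,-v)$ is exactly the evaluation of the quadratic form $B_{\lam_0}[w,w]$; after integration by parts the HJ transport term and the FP coupling term combine into the single cross term $\al\int_{\T^d} m_{\lam_0}^{-\al}\,(Du_{\lam_0}\cdot Dv)\,f\,dx$, and Young's inequality splits it so that it \emph{exactly} absorbs the unfavorable $-\tfrac{\al}{2}m_{\lam_0}^{-\al-1}|Du_{\lam_0}|^2f^2$ term while leaving $\tfrac{\al-2}{2}m_{\lam_0}^{1-\al}|Dv|^2\le 0$; the monotonicity term then gives coercivity in $f$. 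Two small corrections to your outline. First, the zeroth-order contributions $vf$ (from the first equation tested against $f$) and $-fv$ (from the second tested against $-v$) cancel, so the form contains no $v^2$ or $vf$ piece and controls only $\|Dv\|_{L^2}^2+\|f\|_{L^2}^2$; this is precisely why the constant mode of $v$ needs separate treatment. Your maximum-principle step (which requires the regularity bootstrap you mention, to pass to the strong form) handles it; the paper instead stays at the level of the bilinear form and computes $B_{\lam_0}[(\mu,0),(0,\mu)]=\mu^2$, which is lighter. Second, your inequality $\lam_0 D_mV+(1-\lam_0)D_mV_0\ge(1-\lam_0)D_mV_0>0$ fails at $\lam_0=1$: there one needs $D_mV>0$, i.e.\ the strict monotonicity of $V$ in $m$ that the paper explicitly invokes at this point (the merely nondecreasing case is recovered afterwards by an $\ep\arctan(m)$ perturbation). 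Neither point breaks your argument, but both should be stated precisely.
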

\begin{proof}
Let $w=(v, f)$. 
By Young's inequality we have
\begin{align*}
B_{\lam_0}[w, w]&=
\int_{\T^d} 
 \frac{\alpha Du_{\lam_0}\cdot Dv}{m_{\lam_0}^{\al}}f
-\frac{\al |Du_{\lam_0}|^2}{2m_{\lam_0}^{\al+1}}f^2
-\big(\lam_0D_mV+(1-\lam_0)D_mV_0\big) f^2 
-m_{\lam_0}^{1-\al}|Dv|^2\,dx\\
&
\leq
\int_{\T^d} 
-\big(\lam_0D_mV+(1-\lam_0)D_mV_0\big) f^2
+\frac{(\al-2) m_{\lam_0}^{1-\al}|Dv|^2}{2}
\\
&
\leq -C_{\lam_0}(\|Dv\|_{L^2(\T^d)}^2 +\|f\|_{L^2(\T^d)}^2) 
\end{align*}
for a constant $C_{\lam_0}$ which depends on bounds for $m_{\lam_0}$, and $Du_{\lam_0}$, but it is strictly positive
for any solution to \eqref{conti-1} since $0\le\al<1$. 
We have used Assumption (A1) and 
the strict monotonicity of $V$ on $m$. 
This implies that if $Aw=0$ we have $w=(\mu,0)$, for some constant $\mu$. 
Then, by computing 
\[0=(Aw, (0,\mu))=B[(\mu,0), (0,\mu)]=\mu^2,\] 
we conclude that $\mu=0$. 
\end{proof}

\begin{rem}
Note that the injectivity of the operator $A$ holds for all $0\le\alpha<2$. However, the a-priori estimates 
of the previous section are only valid for $0\le\alpha<1$. 
\end{rem}

\begin{lem}\label{lem:3-4}
The range $R(A)$ is closed, and $R(A)=E^{1}$. 
\end{lem}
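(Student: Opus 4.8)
The plan is to show $R(A)=E^1$ via a standard Lax--Milgram-type argument, exploiting the coercivity estimate already established in the proof of Lemma \ref{lem:3-3}. First I would record that the computation in Lemma \ref{lem:3-3} actually gives a two-sided control: we have the bound
\[
B_{\lam_0}[w,w]\le -C_{\lam_0}\big(\|Dv\|_{L^2(\T^d)}^2+\|f\|_{L^2(\T^d)}^2\big),
\]
and hence, writing $B_{\lam_0}[w,w]=(Aw,w)_{E^1}$, Cauchy--Schwarz gives
\[
C_{\lam_0}\big(\|Dv\|_{L^2}^2+\|f\|_{L^2}^2\big)\le |(Aw,w)_{E^1}|\le \|Aw\|_{E^1}\|w\|_{E^1}.
\]
The only gap between the left-hand side and the full $E^1$-norm $\|w\|_{E^1}^2=\|v\|_{H^1}^2+\|f\|_{H^1}^2$ is that coercivity controls $\|Dv\|_{L^2}$ and $\|f\|_{L^2}$ but a priori neither the $L^2$-norm of $v$ itself nor $\|Df\|_{L^2}$.

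I would close this gap in two steps. For the missing $\|v\|_{L^2}$, I would work on the subspace $E^1_0\subset E^1$ of pairs $w=(v,f)$ with $\int_{\T^d}v\,dx=0$; on this subspace the Poincar\'e inequality turns $\|Dv\|_{L^2}$ into control of the full $\|v\|_{H^1}$. The constant direction $(\mu,0)$ is exactly the one-dimensional space handled at the end of Lemma \ref{lem:3-3}, where the explicit computation $(Aw,(0,\mu))=\mu^2$ shows how $A$ acts there; so I would treat $E^1=E^1_0\oplus\{(\mu,0)\}$ and verify surjectivity on each piece, using that the orthogonal complement argument below is insensitive to this finite-dimensional correction. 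For the missing $\|Df\|_{L^2}$, the point is that coercivity already yields injectivity with closed range via the abstract lemma: an operator $A$ on a Hilbert space satisfying $\|Aw\|\,\|w\|\ge c\,\|w\|^2$ (after the Poincar\'e upgrade) is bounded below, hence has closed range and is injective.

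Concretely, the main steps are as follows. First, combining the coercivity bound with Poincar\'e (on $E^1_0$) gives a constant $c>0$ with $\|Aw\|_{E^1}\ge c\|w\|_{E^1}$ for all $w\in E^1_0$; this immediately shows $A|_{E^1_0}$ is injective with closed range, proving the first assertion $R(A)$ is closed. Second, to get $R(A)=E^1$ I would show $R(A)^\perp=\{0\}$: if $z\in E^1$ is orthogonal to $R(A)$, then $0=(Aw,z)_{E^1}=B_{\lam_0}[w,z]$ for all $w$. Here the key structural observation is that $B_{\lam_0}$ is, up to the sign-swap $P$, the weak form of the \emph{same} elliptic system $\cL_{\lam_0}$ — so testing $B_{\lam_0}[w,z]=0$ against $w=z$ (or against the adjoint direction $Pz$) and reusing the identical Young's-inequality coercivity computation forces $z$ to lie in the constant direction and then to vanish, exactly as in Lemma \ref{lem:3-3}. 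Thus $z=0$, and a closed subspace with trivial orthogonal complement is the whole space, giving $R(A)=E^1$.

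The step I expect to be the main obstacle is handling the asymmetry of $B_{\lam_0}$ together with the constant direction simultaneously: because $B_{\lam_0}$ is not symmetric, the coercivity $B_{\lam_0}[w,w]\le -C_{\lam_0}(\dots)$ controls $w$ but the orthogonality relation $B_{\lam_0}[w,z]=0$ must be exploited in the \emph{second} slot, so I would need to check that the bilinear form, restricted to the quotient by constants, is coercive in both arguments — equivalently that the formal adjoint of $\cL_{\lam_0}$ enjoys the same lower bound. The explicit end-of-proof computation in Lemma \ref{lem:3-3} with the test function $(0,\mu)$ is precisely the device that isolates and kills the constant direction, so the delicate bookkeeping is ensuring the Poincar\'e upgrade and the adjoint coercivity interact correctly on $E^1_0$ rather than losing a dimension.
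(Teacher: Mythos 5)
Your surjectivity step (closed range plus $R(A)^\perp=\{0\}$ via $B_{\lam_0}[z,z]=0$ and the constant-direction computation from Lemma \ref{lem:3-3}) matches the paper. The genuine gap is in the closed-range step. The coercivity inequality controls only $\|Dv\|_{L^2}+\|f\|_{L^2}$; your Poincar\'e upgrade on the mean-zero subspace recovers $\|v\|_{L^2}$, but you never recover $\|Df\|_{L^2}$, and the ``abstract lemma'' you invoke requires the full lower bound $\|Aw\|_{E^1}\|w\|_{E^1}\ge c\|w\|_{E^1}^2$. What you actually have after Poincar\'e is
\[
\|Aw\|_{E^1}\,\|w\|_{E^1}\;\ge\; c\left(\|v\|_{H^1}^2+\|f\|_{L^2}^2\right),
\]
and since the right-hand side is missing $\|Df\|_{L^2}^2$ while the left-hand side carries the full $\|w\|_{E^1}$, this does \emph{not} imply $A$ is bounded below on $E^1$. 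So the closed-range assertion is not established by your argument as written; identifying the missing norm is not the same as controlling it.

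The paper closes exactly this gap with a second test function: because $B_{\lam_0}[w_1,w_2]=\int_{\T^d}\cL_{\lam_0}(w_1)\cdot Pw_2\,dx$ with $Pw=(f,-v)$, choosing $w_2=(-f_1,v_1)$ gives $Pw_2=w_1$, i.e.\ one pairs $\cL_{\lam_0}(w_1)$ with $w_1$ itself. The principal part of that pairing is precisely $\|w_1\|_{E^1}^2$ (it contains both $\|v\|_{L^2}^2+\|Dv\|_{L^2}^2$ and $\|f\|_{L^2}^2+\|Df\|_{L^2}^2$), and the remaining cross terms $E_{nm}$ are products in which at least one factor is a quantity already controlled by the coercivity estimate \eqref{estimate111}. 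Combining this with the boundedness of $B_{\lam_0}$ (Lemma \ref{lem:3-2}) and Young's inequality yields $\|w_n-w_m\|_{E^1}^2\le C\|z_n-z_m\|_{E^1}^2$, hence the range is closed. This two-test-function device (coercive test $w$, then norm-recovering test $(-f,v)$) is the key idea your proposal is missing; once you have it, the mean-zero decomposition and the separate treatment of the constant direction in the closed-range step become unnecessary, since the second test already restores $\|v\|_{L^2}$ along with $\|Df\|_{L^2}$.
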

\begin{proof}
Take a Cauchy sequence $z_n$ in the range of $A$, 
that is $z_n=Aw_n$, for some sequence $w_n=(v_n, f_n)$ . 
We claim that $w_n$ is a Cauchy sequence. 
We have
\begin{align*}
(z_n-z_m, w_n-w_m)_{E^1}
&=(A(w_n-w_m),w_n-w_m )_{E^1}\\
&\leq -C_{\lam_0}(\|D(v_n-v_m)\|_{L^2(\T^d)}^2+\|f_n-f_m\|_{L^2(\T^d)}^2). 
\end{align*}
Note that 
\begin{align*}
&|(z_n-z_m, w_n-w_m)_{E^1}|\\
\le&\, 
\|z_n-z_m\|_{E^{0}}\|w_n-w_m\|_{E^{0}}
+\|D(z_n-z_m)\|_{E^{0}}\|D(w_n-w_m)\|_{E^{0}}\\
\le&\, 
\|z_n-z_m\|_{E^{0}}\left(\|v_n-v_m\|_{L^{2}(\T^d)}
+\|f_n-f_m\|_{L^{2}(\T^d)}\right)\\
&+\|D(z_n-z_m)\|_{E^{0}}\left(\|D(v_n-v_m)\|_{L^{2}(\T^d)}
+\|D(f_n-f_m)\|_{L^{2}(\T^d)}\right)\\
\le&\, 
\ep\left(\|f_n-f_m\|^2_{L^{2}(\T^d)}+\|D(v_n-v_m)\|^2_{L^{2}(\T^d)}\right)
+C_{\ep}\|z_n-z_m\|^2_{E^{1}}\\
&+\|z_n-z_m\|_{E^{1}}\left(\|v_n-v_m\|_{L^{2}(\T^d)}
+\|D(f_n-f_m)\|_{L^{2}(\T^d)}\right)
\end{align*}
for $\ep>0$. 

If we fix a suitable small $\ep$ and combine the inequalities above, 
then we obtain  
\begin{align}
&\|D(v_n-v_m)\|_{L^2(\T^d)}^2+\|f_n-f_m\|_{L^2(\T^d)}^2\nonumber\\
\leq&\, 
C\|z_n-z_m\|_{E^1}^2+
C \|z_n-z_m\|_{E^1} (\|v_n-v_m\|_{L^2(\T^d)}+\|D(f_n-f_m)\|_{L^2}(\T^d))\nonumber\\
\leq&\, 
C\|z_n-z_m\|_{E^1}^2+
C \|z_n-z_m\|_{E^1} \|w_n-w_m\|_{E^1}. 
\label{estimate111}
\end{align}
We have 
\begin{align*}
B[w_n-w_m,(-f_n+f_m, v_n-v_m,)]&=
\|w_n-w_m\|_{E^{1}}^2
+E_{nm}, 
\end{align*}
where, using \eqref{estimate111}, $E_{nm}$ satisfies 
\begin{align}
|E_{nm}|\leq &\, 
C\|v_n-v_m\|_{L^2(\T^d)}\|D(v_n-v_m)\|_{L^2(\T^d)}
+C\|f_n-f_m\|_{L^2(\T^d)}\|v_n-v_m\|_{L^2(\T^d)}\nonumber\\
&
+C\|D(f_n-f_m)\|_{L^2(\T^d)}\|D(v_n-v_m)\|_{L^2(\T^d)}\nonumber\\
&+C\|f_n-f_m\|_{L^2(\T^d)}\|D(f_n-f_m)\|_{L^2(\T^d)}\nonumber\\
\leq &\, 
C \|w_n-w_m\|_{E^1} \big( \|z_n-z_m\|_{E^1}^2
+
\|z_n-z_m\|_{E^1}\|w_n-w_m\|_{E^1}\big)^{1/2}. \label{estimate112}
\end{align}
On the other hand, by Lemma \ref{lem:3-2} we have
\begin{equation}\label{estimate113}
B[w_n-w_m,(-f_n+f_m, v_n-v_m,)]\leq C\|z_n-z_m\|_{E^{1}}\|w_n-w_m\|_{E^{1}}. 
\end{equation}
Combining \eqref{estimate112} and \eqref{estimate113} we deduce 
\begin{align*}
&\|w_n-w_m\|_{E^1}^2\\
\leq &\, 
C\|z_n-z_m\|_{E^1}\|w_n-w_m\|_{E^1}\\
&+C \|w_n-w_m\|_{E^1}\big( \|z_n-z_m\|_{E^1}^2+
\|z_n-z_m\|_{E^1}\|w_n-w_m\|_{E^1}\big)^{1/2}. 
\end{align*}
By using Young's inequality we conclude
\[
\|w_n-w_m\|_{E^1}^{2}
\leq C\|z_n-z_m\|_{E^1}^{2}.
\]
From this we get convergence in $E^1$.  

Finally, we prove that $R(A)=E^1$.
Suppose that $R(A)\not=E^1$. Since $R(A)$ is closed, 
there would exist $z\in R(A)^\perp$ with $z\not=0$ such that  
$B_{\lam_0}[z,z]=0$. 
The argument in the proof of Lemma \ref{lem:3-3} implies $z=0$ which is a contradiction. 
\end{proof}

\begin{lem}\label{lem:3-5}
The operator $\cL_{\lam_0}:E^{k+2}\to E^k$ is an isomorphism
for all $k\in\N$ with $k\ge2$. 
\end{lem}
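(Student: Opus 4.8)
The plan is to exploit the fact that, by Lemmas~\ref{lem:3-2}, \ref{lem:3-3} and \ref{lem:3-4}, the operator $A:E^{1}\to E^{1}$ is a bounded bijection and hence, by the open mapping theorem, an isomorphism of $E^{1}$. I will use this to produce a weak solution of $\cL_{\lam_0}(w)=z$ in $E^{1}$ for arbitrary data, and then upgrade its regularity by an elliptic bootstrap exploiting the triangular ellipticity of the system and the smoothness of $u_{\lam_0},m_{\lam_0}$ guaranteed by Proposition~\ref{prop:bootstrap}. For the weak solvability, fix $z=(z_1,z_2)\in E^{k}$. The map $\phi\mapsto \int_{\T^d} z\cdot P\phi\,dx$ is a bounded linear functional on $E^{1}$, since $|\int_{\T^d} z\cdot P\phi\,dx|\le\|z\|_{E^{0}}\|\phi\|_{E^{0}}\le\|z\|_{E^{0}}\|\phi\|_{E^{1}}$ (recall $P$ merely permutes the two components up to sign). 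By the Riesz representation theorem it equals $(\zeta,\phi)_{E^{1}}$ for a unique $\zeta\in E^{1}$. Since $A$ is an isomorphism, there is a unique $w\in E^{1}$ with $Aw=\zeta$, and therefore $B_{\lam_0}[w,\phi]=(Aw,\phi)_{E^{1}}=(\zeta,\phi)_{E^{1}}=\int_{\T^d}z\cdot P\phi\,dx$ for every $\phi\in E^{1}$. As $P$ is an isomorphism of $E^{1}$ (with $P^{2}=-\mathrm{I}$), $P\phi$ ranges over all of $E^{1}$ as $\phi$ does, so this identity is exactly the weak formulation of $\cL_{\lam_0}(w)=z$.

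For the regularity, note that $\cL_{\lam_0}$ is elliptic: its principal part is lower triangular, with $-\Del v$ in the first line and $-\Del f-m_{\lam_0}^{1-\al}\Del v$ (to leading order) in the second, so the principal symbol has determinant $|\xi|^{4}\neq0$ for $\xi\ne0$. I bootstrap one derivative at a time using this structure. Writing the first equation as $v-\Del v=g_1$, where $g_1$ collects the lower-order terms in $Dv$ and $f$ together with $z_1$, the hypotheses $w\in E^{j}$ and $z\in E^{k}$ give $g_1\in H^{\min(j-1,k)}$, whence elliptic regularity on the torus yields $v\in H^{\min(j+1,k+2)}$. Inserting this into the second equation $f-\Del f=g_2$, where $g_2$ gathers $\Div(m_{\lam_0}^{1-\al}Dv)$, $\Div(m_{\lam_0}^{-\al}f\,Du_{\lam_0})$, $\Div(bf)$ and $z_2$, we obtain in the same way $f\in H^{\min(j+1,k+2)}$. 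All coefficients are $C^{\infty}$ and $m_{\lam_0}$ is bounded away from $0$ by Propositions~\ref{prop:estimate1} and \ref{prop:bootstrap}, so the products remain in the asserted spaces. Starting from $w\in E^{1}$ and iterating, we reach $w\in E^{k+2}$.

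Finally, $\cL_{\lam_0}:E^{k+2}\to E^{k}$ is bounded (a second-order operator with smooth coefficients and $m_{\lam_0}^{-1}$ bounded). The previous two steps show it is surjective, while injectivity follows from Lemma~\ref{lem:3-3}: if $\cL_{\lam_0}(w)=0$ with $w\in E^{k+2}\subset E^{2}$, then $B_{\lam_0}[w,\phi]=0$ for all $\phi$, i.e. $Aw=0$, so $w=0$. A bounded linear bijection between Hilbert spaces is an isomorphism by the open mapping theorem, which proves the claim. I expect the regularity bootstrap to be the main obstacle, since one must track carefully the orders produced by the two coupled equations and verify that the solution furnished by $A$ --- obtained through the pairing with $P\phi$ rather than $\phi$ --- is genuinely a solution of $\cL_{\lam_0}(w)=z$ to which elliptic regularity applies.
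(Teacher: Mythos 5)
Your proof is correct and follows essentially the same route as the paper: injectivity from Lemma \ref{lem:3-3}, surjectivity via the Riesz representation theorem combined with the invertibility of $A$ from Lemma \ref{lem:3-4}, and then an elliptic bootstrap on the triangular system $v-\Del v=g_1$, $f-\Del f=g_2$ to upgrade the weak $E^1$ solution to $E^{k+2}$. Your explicit tracking of the pairing through $P$ (testing against $P\phi$ and using that $P$ is an isomorphism of $E^1$) is a slightly more careful rendering of a step the paper glosses over, but it is not a different argument.
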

\begin{proof}
Since $\cL_{\lam_0}$ is injective, it suffices to prove that it is surjective. To do so, 
fix $w_0\in E^k$ with $w_0=(v_0,f_0)$. 
We claim there exists a solution $w_1\in E^{k+2}$
to $\cL_{\lam_0}w_1=w_0$. 

Consider the bounded  linear functional $w\mapsto(w_0,w)_{E^{0}}$ in $E^1$. 
By the Riesz representation theorem, there exists $\tilde{w}\in E^1$ such 
that $(w_0,w)_{E^{0}}=(\tilde{w},w)_{E^{1}}$ for any $w\in E^{1}$. 
In light of Lemmas \ref{lem:3-3} and \ref{lem:3-4}, there exists the inverse of $A$. We  
define 
 $w_1:=A^{-1}\tilde{w}$, and write $w_1=(v,f)$. 
Set
 \[
  \left(
  \begin{array}{l}
  \displaystyle
 g_1[v,f]\\
 g_2[v,f]
 \end{array}
  \right)   
 :=
 \left(
 \begin{array}{l}
 \displaystyle
 \frac{Du_{\lam_0}\cdot Dv}{m_{\lam_0}^{\al}}
 -\frac{\al |Du_{\lam_0}|^2 f}{2m_{\lam_0}^{\al+1}}
 +{\lam_0}b\cdot Dv-\big(\lam_0D_mV+(1-\lam_0)D_mV_0\big)f\\
 -\Div\big(m_{\lam_0}^{1-\al}Dv\big)
 -(1-\al)\Div\big(m_{\lam_0}^{-\al}f Du_{\lam_0}\big)-{\lam_0} \Div( b f)
 \end{array}
 \right).   
\] 
 Then, the identity
\[
(Aw_1,w)_{E^1}=(\tilde{w},w)_{E^1}=(w_0,w)_{E^0}
\]
for any $w\in E^1$,  
means that $v$ is a weak $H^1(\T^d)$ solution to
\[
v-\Delta v=g_1[v,f]+v_0,
\]
and that $f\in H^1(\T^d)$ is also a weak solution to
\[
f-\Delta f=g_2[v,f]+f_0.  
\]
Observe that if $v,f\in H^{j+1}(\Tt^d)$ then $g_1, g_2\in H^j(\T^d)$. 
Additionally, elliptic regularity yields, from $g_i[v,f]\in H^j(\T^d)$, that $v, j\in H^{j+2}(\T^d)$. 
Since we have $v,f\in H^1(\T^d)$, we conclude by induction that $v, f\in H^{j+2}(\T^d)$, for all $j\leq k$. 
\end{proof}

A straightforward result of Lemma \ref{lem:3-5} and the implicit function theorem 
in Banach space is 
\begin{prop}\label{prop:3-6}
The set $\Lam$ is relatively open in $[0,1]$. 
\end{prop}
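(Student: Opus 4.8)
The final statement is Proposition~\ref{prop:3-6}, which asserts that $\Lam$ is relatively open in $[0,1]$. The plan is to deduce this directly from Lemma~\ref{lem:3-5}, which establishes that the Fr\'echet derivative $\cL_{\lam_0}$ of $F(\lam_0,\cdot,\cdot)$ at a solution $(u_{\lam_0},m_{\lam_0})$ is an isomorphism from $E^{k+2}$ to $E^{k}$, combined with the implicit function theorem in Banach spaces.

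First I would fix an arbitrary $\lam_0\in\Lam$ with corresponding classical solution $(u_{\lam_0},m_{\lam_0})\in E^{k_0+2}$. The a-priori bounds from Section~\ref{sec:ape} (in particular Proposition~\ref{prop:estimate1}) guarantee that $m_{\lam_0}\geq\gamma>0$ for some $\gamma$, so $(u_{\lam_0},m_{\lam_0})$ lies in the open set $\{(u,m)\in E^{k_0+2}: m>\gamma\}$ on which $F$ is $C^\infty$, as noted after the definition of $F$. Thus $F$ is continuously differentiable near $(u_{\lam_0},m_{\lam_0})$, and its partial Fr\'echet derivative with respect to $(u,m)$ at this point is precisely $\cL_{\lam_0}$.

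Next, by Lemma~\ref{lem:3-5}, $\cL_{\lam_0}:E^{k_0+2}\to E^{k_0}$ is a bounded linear isomorphism. The implicit function theorem in Banach spaces (as in \cite{D1}) then applies: since $F(\lam_0,u_{\lam_0},m_{\lam_0})=0$ and the derivative $D_{(u,m)}F(\lam_0,u_{\lam_0},m_{\lam_0})=\cL_{\lam_0}$ is invertible, there exist a neighborhood $I$ of $\lam_0$ in $[0,1]$ and a $C^1$ map $\lam\mapsto(u_\lam,m_\lam)\in E^{k_0+2}$ with $(u_{\lam_0},m_{\lam_0})$ as its value at $\lam_0$, such that $F(\lam,u_\lam,m_\lam)=0$ for all $\lam\in I$. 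By shrinking $I$ if necessary we may keep $m_\lam>\gamma/2>0$ by continuity, so each $(u_\lam,m_\lam)$ is a genuine classical solution of \eqref{conti-1} with positive density. Hence $I\subset\Lam$, which shows $\Lam$ contains a neighborhood of each of its points and is therefore relatively open in $[0,1]$.

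The only subtlety, rather than a genuine obstacle, is bookkeeping about which index $k$ to invoke: the implicit function theorem is applied at the level $k=k_0$ to produce a solution in $E^{k_0+2}$, and the a-priori regularity from Proposition~\ref{prop:bootstrap} upgrades this to a smooth solution; I expect this to be entirely routine given that Lemma~\ref{lem:3-5} already supplies the isomorphism property for all $k\geq 2$. The genuinely hard analytic work — establishing injectivity, closed range, and surjectivity of the associated bilinear form, and hence the isomorphism $\cL_{\lam_0}$ — has already been carried out in Lemmas~\ref{lem:3-3}--\ref{lem:3-5}, so the proof of this proposition reduces to a clean invocation of the standard theorem.
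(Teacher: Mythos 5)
Your argument is correct and is exactly the one the paper intends: the paper states Proposition \ref{prop:3-6} as ``a straightforward result of Lemma \ref{lem:3-5} and the implicit function theorem,'' and your proposal simply fills in the routine details (smoothness of $F$ on the set where $m$ is bounded below, invertibility of $\cL_{\lam_0}$, and the resulting local solution branch). No issues.
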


We finally address the existence of solutions to \eqref{mfg-1}, \eqref{mfg-2},
and complete the proof of Theorem \ref{thm:main}.  
\begin{proof}[Proof of Theorem {\rm\ref{thm:main}}]
If $V$ is strictly increasing on $m$,  
the existence of a classical solution to \eqref{mfg-1}, \eqref{mfg-2} is a straightforward result of Proposition \ref{prop:3-6}. 
Uniqueness of solution is discussed in the next appendix, Proposition \ref{propuniq}.

If we only assume $V$ to be nondecreasing on $m$, 
existence can be obtained by using a perturbation argument similar to the one in \eqref{conti-1}. 
More precisely, we add a small perturbation $\ep\arctan(m)$ to $V$ so that we make the potential term strictly monotone. 
This problem admits a unique classical solution $(u_\epsilon, m_\epsilon)$.
Because the a-priori bounds in the previous section do not depend on the strict monotonicity of $V$, 
$(u_\epsilon, m_\epsilon)$
satisfy uniform bounds in any Sobolev space. 
Thus, by compactness, we can extract a convergent subsequence to a limit $(u,m)$ which solves \eqref{mfg-1}-\eqref{mfg-2}.
\end{proof}

\begin{rem}
In this paper, we focus on the case where $V$ is bounded on $m$, 
since our main concern is a lower bound on $m$.
In principle, unbounded potentials $V$ can be studied by adapting 
the techniques in \cite{ll3, CLLP, GPM1, GPM2}, for instance. 
\end{rem}

\appendix
\section{Uniqueness}\label{sec:unique}
Uniqueness of solutions of \eqref{mfg-1}-\eqref{mfg-2} is well understood (see \cite{ll2,GueU} for a related problem). However, 
to make this paper self-contained, 
we give a proof based on Lions ideas in \cite{LCDF}. 

\begin{prop}
\label{propuniq}
The system \eqref{mfg-1}-\eqref{mfg-2} admits at most one classical solution $(u,m)$. 
\end{prop}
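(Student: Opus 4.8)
The plan is to run the Lasry--Lions monotonicity argument, adapted to the congestion coupling. Let $(u_1,m_1)$ and $(u_2,m_2)$ be two classical solutions; by the a-priori bounds of Section \ref{sec:ape} both are smooth with $m_i$ bounded away from $0$, so every integration by parts below is justified and there are no boundary terms on $\T^d$. First I would subtract the two copies of \eqref{mfg-1}, multiply by $m_1-m_2$, and integrate; then subtract the two copies of \eqref{mfg-2}, multiply by $u_1-u_2$, and integrate, moving all derivatives onto the test functions.

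Subtracting the two resulting identities, the ``symmetric'' contributions cancel pairwise: the terms $\int_{\T^d}(u_1-u_2)(m_1-m_2)$ and $\int_{\T^d}D(u_1-u_2)\cdot D(m_1-m_2)$ coming from the zeroth-order and Laplacian parts, together with the drift term $\int_{\T^d}b\cdot D(u_1-u_2)(m_1-m_2)$, occur identically in both identities. What survives is the key identity
\[
\int_{\T^d}\big(V(x,m_1)-V(x,m_2)\big)(m_1-m_2)\,dx
=\int_{\T^d}\Big(\frac{|Du_1|^2}{2m_1^{\al}}-\frac{|Du_2|^2}{2m_2^{\al}}\Big)(m_1-m_2)\,dx
-\int_{\T^d}\big(m_1^{1-\al}Du_1-m_2^{1-\al}Du_2\big)\cdot D(u_1-u_2)\,dx.
\]

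The main step is to show the right-hand side is nonpositive. Expanding and collecting the quadratic terms in $(Du_1,Du_2)$, the right-hand side equals
\[
-\frac12\int_{\T^d}\Big[m_1^{-\al}(m_1+m_2)|Du_1|^2-2\big(m_1^{1-\al}+m_2^{1-\al}\big)Du_1\cdot Du_2+m_2^{-\al}(m_1+m_2)|Du_2|^2\Big]\,dx.
\]
I would then argue that the associated symmetric matrix
\[
M=\begin{pmatrix} m_1^{-\al}(m_1+m_2) & -(m_1^{1-\al}+m_2^{1-\al})\\[2pt] -(m_1^{1-\al}+m_2^{1-\al}) & m_2^{-\al}(m_1+m_2)\end{pmatrix}
\]
is positive semidefinite: its diagonal entries are positive, and its determinant is $(m_1m_2)^{-\al}(m_1+m_2)^2-(m_1^{1-\al}+m_2^{1-\al})^2$, which is nonnegative precisely because, writing $t=m_1/m_2$, the equivalent inequality $(m_1m_2)^{-\al/2}(m_1+m_2)\ge m_1^{1-\al}+m_2^{1-\al}$ reduces to $(t^{\al/2}-1)(t^{1-\al/2}-1)\ge0$, and both factors carry the sign of $t-1$ since $0\le\al<2$. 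This algebraic positivity is the crux of the whole argument; the rest is bookkeeping. Consequently the right-hand side is $\le0$.

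Finally, since $V$ is nondecreasing in $m$ the left-hand side is $\ge0$, so both sides vanish. The strict monotonicity of $V$ then forces the integrand $\big(V(x,m_1)-V(x,m_2)\big)(m_1-m_2)$ to vanish, whence $m_1=m_2$ by continuity. Setting $m:=m_1=m_2$ and $w:=u_1-u_2$, subtracting the two equations \eqref{mfg-1} and using $|Du_1|^2-|Du_2|^2=D(u_1+u_2)\cdot Dw$ gives the linear equation
\[
w-\Del w+\Big(\frac{D(u_1+u_2)}{2m^{\al}}+b\Big)\cdot Dw=0,
\]
which has a positive zeroth-order coefficient and no source term; evaluating at the maximum and minimum of $w$ yields $w\equiv0$, i.e.\ $u_1=u_2$. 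The only genuinely delicate point is the positive semidefiniteness of $M$; the cancellations and the final maximum-principle step are routine.
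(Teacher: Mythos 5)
Your proof is correct, and the cross-multiplication/cancellation bookkeeping leading to the identity with $V$ on the left is exactly the paper's equation \eqref{unique-3} (both arguments also tacitly use the strict monotonicity of $V$, consistent with Theorem \ref{thm:main}). Where you genuinely diverge is in showing that the quadratic part is signed. The paper follows the Lions--Gu\'eant interpolation device: it sets $u_\theta=u_0+\theta(u_1-u_0)$, $m_\theta=m_0+\theta(m_1-m_0)$, writes the expression as $I(1)$ with $I(0)=0$, and shows $I'(\theta)\ge(1-\al/2)\int m_\theta^{1-\al}|D(u_1-u_0)|^2\ge0$ by a single Cauchy--Schwarz at each $\theta$. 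You instead collect the quadratic form in $(Du_1,Du_2)$ directly and verify positive semidefiniteness of the $2\times2$ matrix $M$ via the factorization $(t^{\al/2}-1)(t^{1-\al/2}-1)\ge0$ with $t=m_1/m_2$; I checked the expansion (the diagonal coefficients $\tfrac{m_1-m_2}{2m_1^\al}-m_1^{1-\al}=-\tfrac{m_1+m_2}{2m_1^\al}$, etc.) and the determinant computation, and both are right for $0\le\al\le2$. The trade-off: your route is more elementary and makes the admissible range $\al\in[0,2]$ visible through an explicit algebraic identity, but it depends on being able to diagonalize the global expression in closed form; the interpolation argument only needs a pointwise quadratic-form bound along the segment and therefore generalizes more readily to other Hamiltonians (which is presumably why the paper adopts it). A small point in your favor: after concluding $m_1=m_2$, you spell out the maximum-principle step giving $u_1=u_2$, which the paper leaves implicit.
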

\begin{proof}
Let $(u_0,m_0)$ and $(u_1,m_1)$ be classical solutions to  \eqref{mfg-1}-\eqref{mfg-2}. 
Subtract 
\eqref{mfg-1} for $(u_1,m_1)$ from \eqref{mfg-1} for $(u_0,m_0)$, and 
\eqref{mfg-2} for $(u_1,m_1)$ from \eqref{mfg-2} for $(u_0,m_0)$, respectively, 
and then 
\begin{align}
u_0-u_1
&=\, 
\Del(u_0-u_1)+\frac{|Du_1|^2}{2m_{1}^{\al}}-\frac{|Du_0|^2}{2m_{0}^{\al}}
+b\cdot D(u_1-u_0)+V(x,m_0)-V(x,m_1), 
\label{unique-1}\\
m_0-m_1
&=\, 
\Del(m_0-m_1)+\Div(m_0^{1-\al}Du_0)-\Div(m_1^{1-\al}Du_1)+\Div(b(m_0-m_1)). 
\label{unique-2}
\end{align}
Subtract 
\eqref{unique-2} multiplied by $u_0-u_1$ from 
\eqref{unique-1} multiplied by $m_0-m_1$, and then  
\begin{align}
&\int_{\T^d}
\left(\frac{|Du_1|^2}{2m_{1}^{\al}}-\frac{|Du_0|^2}{2m_{0}^{\al}}\right)(m_0-m_1)\,dx
+
\int_{\T^d}
\left(m_0^{1-\al}Du_0-m_1^{1-\al}Du_1\right)\cdot D(u_0-u_1)\,dx\nonumber\\
&=\int_{\T^d}
(V(x,m_1)-V(x,m_0))(m_0-m_1)\,dx.\label{unique-3}
\end{align}

We prove that the left-hand side of \eqref{unique-3} is non-negative if $\al\in[0,2]$ following the technique in \cite{GueU}. 
Set $u_{\theta}:=u_0+\theta(u_1-u_0)$ 
and $m_{\theta}:=m_0+\theta(m_1-m_0)$ for $\theta\in[0,1]$. Define
\begin{multline*}
I(\theta):=\Big[
-\int_{\T^d}
\left(\frac{|Du_\theta|^2}{2m_{\theta}^\al}
-\frac{|Du_0|^2}{2m_{0}^\al}\right)(m_1-m_0)\\
+\int_{\T^d}\left(m_{\theta}^{1-\al}Du_{\theta}-m_{0}^{1-\al}Du_{0}\right)\cdot D(u_1-u_0)\,dx
\Big]. 
\end{multline*}
Then 
\begin{align*}
\frac{d}{dt}I(\theta)
=&\,
-\al\int_{\T^d}\frac{Du_{\theta}\cdot D(u_1-u_0)(m_1-m_0)}{m_{\theta}^\al}\,dx\\
&\,+
\frac{\al}{2}\int_{\T^d}
\frac{|Du_{\theta}|^2(m_1-m_0)^2}{m_{\theta}^{1+\al}}\,dx
+\int_{\T^d}m_{\theta}^{1-\al}|D(u_1-u_0)|^2\,dx\\
\ge&\,
\left(1-\frac{\al}{2}\right)\int_{\T^d}m_{\theta}^{1-\al}|D(u_1-u_0)|^2\,dx
\ge0 
\end{align*}
for $\al\in[0,2]$.  
Noting that $I(0)=0$, we conclude that $I(1)\ge 0$ which is the claim. 

Thus, by \eqref{unique-3} and the assumption that $V$ is strictly increasing on $m$,  
we get the conclusion. 
\end{proof}

\noindent
{\bf Acknowledgment.}
The authors would like to thank Hung V. Tran for suggestions and comments. 



\end{document}